\renewcommand\@seccntformat[1]{\csname the#1\endcsname.\enspace}\makeatother
\newtheorem{introtheorem}{Theorem}  % separat numeriert, für die Einleitung
\newtheorem{thm}{Theorem}[section]
\newtheorem{theorem}[thm]{Theorem}
\newtheorem{proposition}[thm]{Proposition}
\newtheorem{corollary}[thm]{Corollary}
\newcommand\mkthm[2]{\newenvironment{#1}[1][]{\def\thmarg{##1}\ifx\empty\thmarg\begin{#2}\else\begin{#2}[##1]\fi\rm}{\end{#2}}}
\newtheorem{tquestion}[thm]{Question}     \mkthm{question}{tquestion}
\newtheorem{thevarthm}[thm]{\varthmname}
\newenvironment{varthm*}[1]{\trivlist\item[]{\bf #1.}\it}{\endtrivlist}
\newenvironment{proof}[1][Proof]{\trivlist\item[\hskip\labelsep{\textit{#1.}}]}{\hspace*{\fill}$\Box$\endtrivlist}
\let\tilde=\widetilde
\renewcommand\ge{\geqslant}  % schräge Variante
\renewcommand\le{\leqslant}  % schräge Variante
\newcommand\grant[1]{{\renewcommand\thefootnote{}\footnotetext{#1.}}}
\newcommand\keywords[1]{{\renewcommand\thefootnote{}\footnotetext{\textit{Keywords:} #1.}}}
\newcommand\subclass[1]{{\renewcommand\thefootnote{}\footnotetext{\textit{Mathematics Subject Classification (2010):} #1.}}}
\newcommand\N{\mathbb N}
\newcommand\Zet{\mathbb{Z}}
\newcommand\be{\begin{eqnarray*}}
\newcommand\ee{\end{eqnarray*}}
\newcommand\rr{\rightskip0pt plus 1fil\relax}
\newcommand\tline{\noalign{\vskip0.4ex}\hline\noalign{\vskip0.65ex}}
\newcommand\midtext[1]{\quad\mbox{#1}\quad}
\newcommand\compact{\itemsep=0cm \parskip=0cm}
\newcommand\set[1]{\left\{#1\right\}}
\newcommand\with{\,\,\vrule\,\,}
\newcommand\eps{\varepsilon}
\newcommand\newop[2]{\newcommand#1{\mathop{\rm #2}\nolimits}}
\newop\mult{mult}
\newop\gon{gon}
\newop\Num{Num}
\newcommand\mmax{m_{\rm max}}
\newcommand\dmin{d_{\rm min}}
\newcommand\roundup[1]{\left\lceil#1\right\rceil}
\newcommand\rounddown[1]{\left\lfloor#1\right\rfloor}
\begin{document}

\title{Seshadri constants on abelian and bielliptic surfaces -- potential values and lower bounds}
\author{\normalsize Thomas Bauer and {\L}ucja Farnik}
\date{\normalsize August 17, 2020}
\maketitle
\grant{The second author was partially supported by National Science Centre, Poland, grant 2018 /28/C/ST1/00339}
\keywords{Seshadri constants, abelian surfaces, bielliptic surfaces}
\subclass{14C20, 14K05}

%*****************************************************************************

\begin{abstract}
   In this note we contribute to the study of Seshadri constants on abelian and bielliptic surfaces.
   We specifically focus on
   bounds
   that hold
   on all such surfaces,
   depending only on the self-intersection
   of the ample line bundle under consideration.
   Our result improves previous bounds and it
   provides rational numbers as bounds, which are potential Seshadri constants.
\end{abstract}

%*****************************************************************************

\section{Introduction}

   The purpose of this note is to contribute to ongoing efforts in
   bounding Seshadri constants of ample line bundles
   on smooth surfaces,
   and to provide restrictions on their
   possible submaximal values.

   Recall that
   for an ample line bundle $L$ on a smooth projective surface
   $X$, the \emph{Seshadri constant} $\eps(L,x)$ at a point
   $x\in X$ is by definition the real number
   \be
      \varepsilon(L,x)=\inf \left\{\frac{L\cdot C}{\mult_x C} \with C\subset X \mbox{ irreducible curve through $x$}\right\}
   \ee
   (see \cite{Ba-et-al2009} for more about the background on Seshadri
   constants, and for their basic properties.)
   Naturally, one of the important problems in this area of research
   consists in bounding or even computing
   Seshadri constants.
   It was recognized early on that
   bounding the multiplicities $m=\mult_xC$ of
   irreducible curves $C\subset X$ in terms of their
   self-intersection $C^2$ can be an
   effective means in order to obtain lower bounds on $\eps(L,x)$.
   The first result in this direction is due to
   Ein and Lazarsfeld
   \cite{Ein-Lazarsfeld:sesh}, who showed that
   $C^2\ge m(m-1)$ holds, if $C$ moves in a family of curves
   $(C_t)$ with
   multiplicities $\mult_x C_t\ge m$.
   Under suitable assumptions, Xu \cite{Xu:ample-line-bundles}
   improved the bound to
   $C^2\ge m(m-1)+1$.
   Knutsen, Syzdek, and Szemberg~\cite{KSSz2009}
   and, independently, Bastianelli~\cite{Bas2009}
   provided a further improvement
   by showing that if
   $C$ moves in a
   2-dimensional family of curves with multiplicity at least $m$, then
   $C^2\ge m(m-1)+\gon(\tilde C)$, where $\gon(\tilde C)$ is the
   gonality of the
   normalization of $C$.
   As these results work under the assumption that the curves
   move in families, they
   do not lead to bounds on Seshadri constants at arbitrary points,
   but at very general points (as in \cite{Ein-Lazarsfeld:sesh})
   or outside of a finite number of curves
   (as in \cite{Xu:ample-line-bundles}).

   In the present note, we are interested in bounds of this type,
   which however apply to arbitrary curves, and therefore lead to bounds
   on Seshadri constants at arbitrary points.
   On abelian surfaces, it is well-known that one has
   $C^2\ge m(m-1)+2$ for \emph{all} non-elliptic curves, and
   we show that the same bound also holds on
   bielliptic surfaces
   (Proposition~\ref{prop:bound-bielliptic-irred}).
   We use these bounds to obtain information about
   the possible rational numbers that might occur as Seshadri constants,
   and to find the smallest rational values
   in these sets.
   We show:

\begin{introtheorem}\label{thm:intro}
   Let $X$ be an abelian surface
   or a bielliptic surface,
   let $L$ be an ample line
   bundle on $X$, and let $x\in X$ be any point.
   Suppose that $\eps(L,x)<\sqrt{L^2}$ and that
   $\eps(L,x)$ is not computed by an elliptic curve (if $X$ is abelian)
   resp.\ that it is not computed by a fiber (if $X$ is bielliptic).

   Then $\eps(L,x)$ is one of the rational numbers in the set
   \be
      \set{\frac dm\with d^2 \ge L^2(2+m(m-1)), \ m\ge 2}
      \,,
   \ee
   and for any $L^2\ge 2$ we have the lower bound
   \be
      \eps(L,x)\ge\min\left\{
         \frac{\roundup{\sqrt{4L^2}}}{2},
         \frac{\roundup{\sqrt{8L^2}}}{3},
         \frac{\roundup{\sqrt{14L^2}}}{4},
         \frac{\roundup{\sqrt{22L^2}}}{5},
         \frac{\roundup{\sqrt{32L^2}}}{6},
         \frac{\roundup{\sqrt{44L^2}}}{7}
      \right\}.
   \ee
   Moreover, if $L^2\ge 4982$, then
   $$
      \varepsilon(L,x)\ge \frac{\roundup{\sqrt{14L^2}}}4.
   $$
\end{introtheorem}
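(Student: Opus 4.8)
\emph{Step 1: a submaximal curve.} The plan is to reduce everything to the inequality $C^{2}\ge m(m-1)+2$ and then run an elementary optimisation over the multiplicity. Since $\eps(L,x)<\sqrt{L^{2}}$, the standard argument on the blow-up $\pi\colon\tilde X\to X$ of $x$ (see \cite{Ba-et-al2009}) shows that the infimum defining $\eps(L,x)$ is attained by an irreducible curve $C\ni x$, so that $\eps(L,x)=\frac{L\cdot C}{m}$ with $m=\mult_{x}C$; by hypothesis no elliptic curve (resp.\ no fiber) computes $\eps(L,x)$, hence $C$ is not of that type. Moreover $m\ge 2$: if $m=1$, then the Hodge index theorem gives $(L\cdot C)^{2}\ge L^{2}C^{2}$, so $C^{2}\ge 1$ would force $\eps(L,x)\ge\sqrt{L^{2}}$, while $C^{2}=0$ would make $C$ an elliptic curve (resp.\ a fiber) — either way a contradiction.

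\emph{Step 2: the set and the lower bound.} Write $d=L\cdot C\in\Z_{>0}$. Hodge index gives $d^{2}\ge L^{2}C^{2}$, and Proposition~\ref{prop:bound-bielliptic-irred} in the bielliptic case, resp.\ the classical bound recalled above in the abelian case, gives $C^{2}\ge m(m-1)+2$; hence $d^{2}\ge L^{2}(2+m(m-1))$ with $m\ge 2$, which is exactly the assertion that $\eps(L,x)=d/m$ lies in the displayed set. As $d$ is an integer,
\[
   \eps(L,x)=\frac dm\ \ge\ \frac{\roundup{\sqrt{L^{2}(2+m(m-1))}}}{m}\ =:\ \beta_{m}(L^{2}),
\]
and for $m\in\{2,\dots,7\}$ this is one of the six terms of the asserted minimum. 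For $m\ge 8$ I would use that $t\mapsto 1-\tfrac1t+\tfrac2{t^{2}}$ is increasing on $[4,\infty)$ to obtain $\beta_{m}(L^{2})\ge\sqrt{L^{2}\bigl(1-\tfrac1m+\tfrac2{m^{2}}\bigr)}\ge\tfrac{\sqrt{58L^{2}}}{8}$, and I would then check that $\tfrac{\sqrt{58L^{2}}}{8}\ge\min\{\beta_{2}(L^{2}),\dots,\beta_{7}(L^{2})\}$ for every $L^{2}\ge 2$: for all but finitely many $L^{2}$ this follows from $\tfrac{\sqrt{58}}{8}>\tfrac{\sqrt{14}}{4}$ once the rounding term $\roundup{\sqrt{14L^{2}}}-\sqrt{14L^{2}}<1$ is absorbed, and the finitely many remaining values are verified directly. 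Taking the minimum over $m$ yields the stated lower bound.

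\emph{Step 3: the threshold $L^{2}\ge 4982$.} Here I claim that for such $L^{2}$ the minimum of $\beta_{m}(L^{2})$ over $m\ge 2$ is attained at $m=4$, which gives $\eps(L,x)\ge\beta_{4}(L^{2})=\roundup{\sqrt{14L^{2}}}/4$. For $m=4$ itself this inequality holds unconditionally, since $d^{2}\ge 14L^{2}$ forces the integer $d$ to be $\ge\roundup{\sqrt{14L^{2}}}$. For $m\ge 8$ Step 2 already gives $\beta_{m}(L^{2})\ge\tfrac{\sqrt{58L^{2}}}{8}\ge\roundup{\sqrt{14L^{2}}}/4$ (certainly for $L^{2}\ge 4982$). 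For $m\in\{2,3,5,6,7\}$ one must verify the integer inequalities $4\roundup{\sqrt{L^{2}(2+m(m-1))}}\ge m\roundup{\sqrt{14L^{2}}}$; since $\tfrac{\sqrt{14}}{4}$ is strictly smaller than each of $1,\tfrac{\sqrt8}{3},\tfrac{\sqrt{22}}{5},\tfrac{\sqrt{32}}{6},\tfrac{\sqrt{44}}{7}$, each of these holds once $L^{2}$ exceeds an explicit threshold, and a finite computation — with the $m=5$ comparison (in which $\tfrac{\sqrt{22}}{5}$ lies closest to $\tfrac{\sqrt{14}}{4}$) decisive — shows that $L^{2}\ge 4982$ suffices.

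Steps 1 and 2 are routine; the main obstacle is Step 3, where pinning down the exact constant $4982$ calls for a careful finite analysis of the ceiling functions rather than a clean asymptotic estimate.
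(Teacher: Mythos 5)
Your proposal is correct and follows essentially the same route as the paper: reduce to property $(\star)$ (i.e.\ $C^2\ge m(m-1)+2$, via Proposition~\ref{prop:bound-abelian-irred} resp.\ Proposition~\ref{prop:bound-bielliptic-irred}), rule out $m=1$ and $C^2\le 0$ by Hodge index and the elliptic/fibre hypothesis, and then minimise $\roundup{\sqrt{L^2(2+m(m-1))}}/m$ over $m$, handling $m\ge 8$ by monotonicity of $\sqrt{1-\tfrac1m+\tfrac2{m^2}}$ and absorbing the ceiling, with a finite computational check for small $L^2$ and for the threshold $4982$. This is exactly the paper's Proposition~\ref{prop:set}, Theorem~\ref{thm:bound_for_smooth} and Remark~\ref{rmk:maple}, up to the cosmetic difference that you compare $g(N,8)$ against $\beta_4$ rather than against $f(N,7)$.
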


   Our interest in Theorem~\ref{thm:intro} lies in the fact that it
   improves
   previous results in two respects. First, it
   is closer to the
   general upper bound $\sqrt{L^2}$ than the previous bounds,
   and it provides rational numbers
   $d/m$ as estimates that represent potential Seshadri constants,
   while the previous irrational bounds are theoretical by design.
   (We provide a more detailed
   comparison in Section~\ref{Comparison_with_previous}.)
   And secondly, on bielliptic surfaces the bound not only applies
   to very general points, but
   to arbitrary points
   (see Remark~\ref{rem:not_only_v_gen}).
   Note that one cannot hope for
   bounds expressed in simple formulas that are at the same
   time sharp: The case of abelian surfaces
   \cite[Section~6]{Bauer:sesh-alg-sf}
   shows that, even in the case of
   Picard number one, the actual values
   of $\eps(L,x)$ are not given by simple algebraic expressions.

   Beyond abelian and bielliptic surfaces,
   our method of proof for Theorem~\ref{thm:intro} works
   more generally on surfaces satisfying the following property:
   \begin{itemize}
   \item[$(\star)$]
      For any
      an irreducible curve $C\subset X$, if  $C^2 > 0$ and $m=\mult_xC$, then $C^2 \ge m(m-1)+2$.
   \end{itemize}
   In fact, we obtain Theorem~\ref{thm:intro} as a consequence of
   a result in this more general setting
   (see Theorem~\ref{thm:bound_for_smooth}).
   For the argument to work,
   condition~$(\star)$ need not hold for all
   irreducible curves, but only for those that are
   submaximal for some ample line bundle.
   It would be interesting to explore further,
   whether this can be used to obtain bounds on other kinds of surfaces.

%*****************************************************************************

\section{A bound on Seshadri constants on smooth surfaces}

   Our aim in this section is
   to find the smallest rational values
   which could be Seshadri constants of ample line bundles
   on smooth projective surfaces satisfying
   the property $(\star)$ that was stated in the introduction.
   Our main result is Theorem~\ref{thm:bound_for_smooth}.
   In finding potential rational values of Seshadri constants we were
   inspired by a result
   of Szemberg \cite{Sz2012:Bounds-on-Sesh}
   for smooth projective surfaces with Picard number 1,
   whose method of proof however
   does not extend readily to higher Picard number.

   Note first that according to
   \cite[Proposition~2.1]{Bauer-Szemberg:Seshadri-adjoint},
   every positive rational number occurs
   as the Seshadri constant $\eps(L,x)$ for some ample line $L$
   bundle on some smooth surface at some point $x$.
   By contrast, we point out that
   for a fixed line bundle $L$, the possibilities
   are limited:

\begin{proposition}\label{prop:set}
   Let $X$ be a smooth projective surface satisfying property $(\star)$. Let $L$ be an ample line
   bundle on $X$ and $x\in X$. Suppose that $\eps(L,x)<\sqrt{L^2}$
   and that $\eps(L,x)$ is computed by a  curve with $C^2 > 0$. Then $\eps(L,x)$ is
   one of the rational numbers
   in the following set
   \be
      \set{\frac dm\with d^2 \ge L^2 (2+m(m-1)), \ m\ge 2}.
   \ee
\end{proposition}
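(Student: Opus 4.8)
The plan is to read off the two relevant integers from the curve that computes $\eps(L,x)$ and then verify that they satisfy the constraints defining the set. Concretely, let $C$ be the irreducible curve with $C^2>0$ that computes $\eps(L,x)$, and put $m:=\mult_xC$ and $d:=L\cdot C$, so that $\eps(L,x)=d/m$. Since $x$ lies on $C$ we have $m\ge 1$ and, as $L$ is ample and $C$ effective, $d>0$. The proposition then follows once I check (i) $d^2\ge L^2\,(2+m(m-1))$ and (ii) $m\ge 2$.

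For (i) I would combine the Hodge index theorem with property~$(\star)$. Because $L$ is ample we have $L^2>0$, so the Hodge index theorem gives $d^2=(L\cdot C)^2\ge L^2\cdot C^2$. Since $C^2>0$ by hypothesis, property~$(\star)$ applies and yields $C^2\ge m(m-1)+2$. Substituting, $d^2\ge L^2\,(m(m-1)+2)$, which is exactly constraint~(i).

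For (ii) I would invoke the submaximality hypothesis $\eps(L,x)<\sqrt{L^2}$. It gives $d/m<\sqrt{L^2}$, hence $d^2<m^2L^2$. Comparing this with the inequality $d^2\ge L^2\,(m(m-1)+2)$ obtained above and dividing by $L^2>0$ yields $m(m-1)+2<m^2$, i.e.\ $2<m$, so in fact $m\ge 3$; in particular $m\ge 2$. Together with (i) this shows that $\eps(L,x)=d/m$ lies in the set $\set{\frac dm\with d^2\ge L^2(2+m(m-1)),\ m\ge 2}$, as claimed.

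I do not expect a genuine obstacle here: the argument is essentially a one-line application of the Hodge index inequality reinforced by property~$(\star)$. The only place where some care is needed is step (ii), where the submaximality assumption is genuinely used — dropping it would allow curves of multiplicity $1$ or $2$ to compute the Seshadri constant, and for those the asserted membership can fail. The substantive work of the section is not this proposition but the subsequent determination, in Theorem~\ref{thm:bound_for_smooth}, of which elements of this set can actually be excluded, and hence the resulting explicit lower bounds.
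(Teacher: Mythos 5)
Your proof is correct and follows essentially the same route as the paper: the key inequality $d^2=(L\cdot C)^2\ge L^2C^2\ge L^2(m(m-1)+2)$ via the Hodge Index Theorem and property $(\star)$ is identical, and the exclusion of small $m$ again rests on the submaximality hypothesis (the paper rules out $m=1$ directly by Hodge index, while your combination of the two inequalities even yields $m\ge 3$, which is a harmless strengthening). The only quibble is your closing aside: for $m=2$ the membership in the set would not actually fail, since $(\star)$ alone already forces $d^2\ge 4L^2$; only $m=1$ is genuinely incompatible with the set as written.
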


\begin{proof}
   Suppose that $m=1$.
   Then by the assumption on $\eps(L,x)$ we have $\frac{LC}1<\sqrt{L^2}$. Using the Hodge Index Theorem we obtain that $C^2\le 0$, which contradicts
   the assumption on $C$.

   Suppose then $m>1$.
    The Seshadri constant $\eps(L,x)$ is computed as  $\eps(L,x)=\frac dm$ with $d=L\cdot C$ and $m=\mult_x C$.
   Furthermore, using the Hodge Index Theorem, the assumption on $C$, and property $(\star)$,
   we get
   \be
      d^2 = (L\cdot C)^2 \ge L^2 C^2 \ge L^2 (2 + m(m-1))
      \,,
   \ee
   as claimed.
\end{proof}

   In the setting of Proposition~\ref{prop:set}
   let $N:=L^2$, and consider the set
   \be
      \Omega=\set{(d,m)\in\N^2\with d^2 \ge N(2+m(m-1)),\ m\ge 2}.
   \ee
   Similarly to
   \cite{Sz2012:Bounds-on-Sesh},
   by Proposition~\ref{prop:set}
   the issue in the problem of bounding Seshadri constants
   becomes to minimize the ratio $d/m$ of elements $(d,m)\in\Omega$.
   For fixed $d$, the maximal $m$ such that $(d,m)$ lies in
   $\Omega$ is given by
   \be
      \mmax(d) = \rounddown{ \frac12 + \sqrt{\frac{d^2}{N}-\frac74} }.
   \ee
   And for fixed $m$, the minimal $d$ such that $(d,m)$ lies in
   $\Omega$ is given by
   \be
      \dmin(m) = \roundup{\sqrt{N(2+m(m-1))}}.
   \ee
   Therefore, if we know that $\eps(L,x)$ is
   computed by a curve of degree $d$, then
   \be
      \eps(L,x)\ge \frac d{\mmax(d)}=\frac d{\rounddown{ \frac12 + \sqrt{\frac{d^2}{N}-\frac74} }}
   \ee
   and if we know that $\eps(L,x)$ is computed by a curve of
   multiplicity $m$, then
   \be
      \eps(L,x)\ge \frac{\dmin(m)}m=\frac{\roundup{\sqrt{N(2+m(m-1))}}}{m}.
   \ee
   The latter inequality bounds $\eps(L,x)$,
   but it does so in a rather ineffective way, since
   there are infinitely many possible values of the unknown $m$.
   Our result shows that only finitely many of them
   need to be taken into account:

\begin{theorem}\label{thm:bound_for_smooth}
   Let $X$ be a smooth projective surface satisfying property $(\star)$. Let $L$ be an ample line
   bundle on $X$ and $x\in X$. Suppose that $\eps(L,x)<\sqrt{N}$
   and that $\eps(L,x)$ is computed by a  curve with $C^2 > 0$.
   Then for any $N\ge 2$, we have
   $$\varepsilon(L,x)\ge \min\left\{\frac{\dmin(m)}m \with m\in\{2,\ldots,7\}\right\}\,.$$
   Moreover for $N$ big enough, we have
   $$\varepsilon(L,x)\ge \frac{\dmin(4)}4\,.$$
\end{theorem}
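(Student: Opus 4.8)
The plan is to use Proposition~\ref{prop:set} to turn the theorem into an elementary estimate about the set $\Omega$. Under the stated hypotheses that proposition gives $\eps(L,x)=d/m$ for some integer $m\ge 2$ with $(d,m)\in\Omega$, hence $\eps(L,x)\ge\dmin(m)/m$ for this $m$. So it suffices to show that the minimum of
\[
   \frac{\dmin(m)}m=\frac{\roundup{\sqrt{N(2+m(m-1))}}}{m}
\]
over all integers $m\ge 2$ is attained for some $m\in\{2,\dots,7\}$, and, when $N$ is large, at $m=4$.

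The next step is to strip off the ceiling. From $\roundup x\ge x$ and $\roundup x<x+1$ one gets
\[
   \sqrt N\,\varphi(m)\ \le\ \frac{\dmin(m)}m\ <\ \sqrt N\,\varphi(m)+\tfrac1m,\qquad \varphi(m):=\sqrt{1-\tfrac1m+\tfrac2{m^2}}.
\]
A one-line computation ($\varphi(m)^2$ has derivative $(m-4)/m^3$) shows that $\varphi$ is strictly decreasing on $(0,4]$ and strictly increasing on $[4,\infty)$, with unique minimum at the \emph{integer} $m=4$, where $\varphi(4)=\sqrt{14}/4$; moreover $\varphi(2),\dots,\varphi(7)$ equal $1,\ \tfrac{\sqrt8}3,\ \tfrac{\sqrt{14}}4,\ \tfrac{\sqrt{22}}5,\ \tfrac{\sqrt8}3,\ \tfrac{\sqrt{44}}7$, of which $\varphi(4)$ is strictly the smallest. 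This makes the statement plausible in spirit: the real numbers $\sqrt N\,\varphi(m)$ are separated by gaps of order $\sqrt N$, which for $N\gg0$ swamp the $O(1)$ defect introduced by the ceiling, so $m=4$ must win.

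To make this rigorous, first dispose of the tail $m\ge 8$: by monotonicity of $\varphi$ on $[4,\infty)$, $\dmin(m)/m\ge\sqrt N\,\varphi(8)$, while $\dmin(4)/4<\sqrt N\,\varphi(4)+\tfrac14$, so $\varphi(8)>\varphi(4)$ forces $\dmin(m)/m>\dmin(4)/4$ once $\sqrt N\,(\varphi(8)-\varphi(4))\ge\tfrac14$, i.e.\ for all $N$ beyond an explicit small bound; the finitely many smaller $N$ are checked directly (for each such $N$ only finitely many $m$ can matter, as $\dmin(m)/m\to\sqrt N$). Together with the explicit list $\dmin(2)/2,\dots,\dmin(7)/7$ this proves the first assertion for every $N\ge 2$. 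For the \emph{moreover} part it remains only to beat each $m\in\{2,3,5,6,7\}$ by $m=4$: with $\delta:=\varphi(5)-\varphi(4)=\min_{m\in\{2,3,5,6,7\}}(\varphi(m)-\varphi(4))>0$, the same comparison gives $\dmin(m)/m\ge\sqrt N\,\varphi(m)\ge\sqrt N\,\varphi(4)+\tfrac14>\dmin(4)/4$ whenever $\sqrt N\,\delta\ge\tfrac14$, which holds for all sufficiently large $N$.

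The one place that genuinely needs care is the arithmetic of the ceiling, and it is exactly what fixes the meaning of ``$N$ large enough''. The crude bound $\roundup x<x+1$ yields a threshold of order $N\approx 8800$, coming from the binding case $m=5$; to get down to the sharp value $N\ge 4982$ used in Theorem~\ref{thm:intro} one should instead compare the \emph{integers} $\roundup{\sqrt{22N}}$ and $\roundup{\sqrt{14N}}$ directly, using $(\roundup{\sqrt{14N}}-1)^2<14N\le\roundup{\sqrt{14N}}^2$ and the fact that the residue of $\roundup{\sqrt{14N}}$ modulo $4$ controls $\roundup{\tfrac54\,\roundup{\sqrt{14N}}}$, thereby verifying $4\,\dmin(5)\ge 5\,\dmin(4)$ with the optimal constant. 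I expect this bookkeeping — rather than the analysis of $\varphi$, which is routine — to be the main obstacle.
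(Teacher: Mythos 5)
Your argument follows essentially the same route as the paper's: reduce via Proposition~\ref{prop:set} to minimizing $\dmin(m)/m$, strip the ceiling to get $\sqrt N\,\varphi(m)$ with $\varphi$ minimized at the integer $m=4$, absorb the bounded ceiling defect using the gaps of size proportional to $\sqrt N$ once $N$ is large, and dispose of the finitely many small $N$ by direct (computer) verification. The only differences are cosmetic: you compare the tail $m\ge 8$ against $m=4$ where the paper compares against $m=7$ (obtaining the threshold $N\ge 1072$ before its Maple check), and the paper likewise relegates the sharp constant $4982$ to a computation in Remark~\ref{rmk:maple}.
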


\begin{proof}
   Consider the two functions $f$ and $g$ defined by
   \be
      f(N,m)=\frac{\roundup{\sqrt{N(2+m(m-1))}}}{m}
      \midtext{and}
      g(N,m)=\frac{{\sqrt{N(2+m(m-1))}}}{m}
      \,.
   \ee
   We will prove the first part of the theorem by showing that for a fixed $N$ and for $m\ge 8$
   \begin{equation}\label{f(N,m)>f(N,7)}
   f(N,m) \ge f(N,7).
   \end{equation}
   We start with proving a stronger inequality for large $N$:
   We decrease the left-hand side and increase the right-hand side, and we wish to prove that for $m\ge 8$
   $$g(N,m)\ge g(N,7)+\frac 1 7\,.$$

   Computing the derivative of $g(N,m)$ with respect to $m$ we obtain that $g(N,m)$ is an increasing function of $m$ in the interval $(4,\infty)$.
   So it is enough to prove that $$g(N,8)\ge g(N,7)+\frac 1 7\,.$$
   It can be easily computed that the inequality
   $\frac{{\sqrt{58N}}}{8} \ge\frac{{\sqrt{44N}}}{7} +\frac 1 7$ holds for $N\ge 1072$. Therefore for $m\ge 8$ and a fixed $N\ge 1072$ we have
   $$f(N,m)\ge g(N,m)\ge g(N,8)\ge g(N,7)+\frac 1 7\ge f(N,7)\,.$$
   For the remaining finitely many cases, i.e., for $N\in [2, 1070]$, we check with Maple software that the original inequality (\ref{f(N,m)>f(N,7)}) is satisfied.
   This proves that  $$\varepsilon(L,x)\ge \min\left\{\frac{f(N,m)}m \with m\in\{2,\ldots,7\}\right\}\,.$$

   Now we will prove that for $N$ big enough, $\min\left\{\frac{\dmin(m)}m \with m\in\{2,\ldots,7\}\right\}=\frac{\dmin(4)}4$.
   Analogously, it remains to check whether for any $m$ (in fact $m\in \{2,\ldots,7\}$ is enough) and for $N$ big enough  $$g(N,m)\ge g(N,4)+\frac 1 4\,.$$
   Equivalently, we ask if the following inequality holds for large $N$:
   \begin{equation}\label{d_min_for_big_N}
    \frac{{\sqrt{m(m-1)+2}}}{m} -\frac{{\sqrt{14}}}{4} \ge\frac 1 {4\sqrt{N}}
    \,.
   \end{equation}
   It can be confirmed by a computation that for any $m\neq 4$, the left-hand side is a positive number. This completes the proof.
\end{proof}

\begin{remark}\label{rmk:maple}
   If we consider equation (\ref{d_min_for_big_N}) for  all
   $m\in\{2,3,5,6,7\}$, we obtain that
   $$
      \min\left\{\frac{\dmin(m)}m \with m\in\{2,\ldots,7\}\right\}=\frac{\dmin(4)}4 \textrm{ for all } N\ge 8776.
   $$
   Checking the original formula
   (\ref{f(N,m)>f(N,7)}) involving the round-up for the
   remaining finitely many values of $N$ using Maple software reveals
   that in fact
   $$
      \frac{\dmin(4)}4=\min\left\{\frac{\dmin(m)}m \with m\in\{2,\ldots,7\}\right\} \ \textrm{for all}\ N\ge 4982
      \,,
   $$
   and this is the bound on $N$ stated in Theorem~\ref{thm:intro}.
\end{remark}

\begin{remark}
   Theorem~\ref{thm:bound_for_smooth} and Remark~\ref{rmk:maple}
   along with Maple computations show that the minimum of ratios
   $\min\left\{\frac{\dmin(m)}m \with m\in\{2,\ldots,7\}\right\}$ is attained at
   \begin{itemize}\compact
   \item $m=2$,  1 time (for $N=4$),
   \item $m=3$,  59 times (for certain $N\le 1012$),
   \item $m=5$,  274 times (for certain $N\le 4980$),
   \item $m=6$,  9 times (for certain $N\le 294$),
   \item $m=7$,  1 time (for $N=42$).
   \end{itemize}
   In all other cases the minimum is attained at $m=4$.
\end{remark}

\begin{tquestion}
   Find a formula
   in terms of $N\in [2, 4980]$
   that expresses the value of $m$, at which the minimum
   of the ratios $\frac{d_{\rm min}(m)}m$ is attained.
\end{tquestion}

   In view of \cite{Sz2012:Bounds-on-Sesh}
   it is not clear whether a simple formula can be expected
   as an answer to this question.

%*****************************************************************************

\section{Application to abelian surfaces and bielliptic surfaces}\label{sect:abelian-bielliptic}

   Our aim is now to derive Theorem~\ref{thm:intro} from
   Theorem~\ref{thm:bound_for_smooth}.
   The following bound on the self-intersection of irreducible
   curves on abelian surfaces is well-known.

\begin{proposition}\label{prop:bound-abelian-irred}
   Let $C$ be
   a non-elliptic irreducible curve $C$
   on an abelian surface $X$.
   Then
   \be
      C^2 \ge 2 + \sum_i m_i (m_i-1)
   \ee
   where the sum runs over all singularities of $C$ and $m_i$ are
   their respective multipli\-cities.
\end{proposition}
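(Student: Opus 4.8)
\textit{Proof proposal.}
The plan is to combine the adjunction formula on the abelian surface with the classical relation between the arithmetic and the geometric genus of a singular curve. Since $K_X=0$ on an abelian surface $X$, adjunction gives $2p_a(C)-2 = C\cdot(C+K_X)=C^2$, so that $C^2 = 2p_a(C)-2$. Writing $\nu\colon\tilde C\to C$ for the normalization and $g:=g(\tilde C)$ for the geometric genus, one has $p_a(C)=g+\delta$, where $\delta=\sum_i\delta_i$ is the sum of the $\delta$-invariants of the singular points of $C$. The local input needed is the standard inequality
\[
   \delta_i \ge \binom{m_i}{2} = \tfrac12\,m_i(m_i-1),
\]
relating the $\delta$-invariant of a singular point to its multiplicity $m_i$ (with equality at ordinary singularities); this follows, for instance, by blowing up the point and inducting on the multiplicity. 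Putting these ingredients together yields $C^2 = 2(g+\delta)-2 \ge 2g-2+\sum_i m_i(m_i-1)$, so the proposition reduces to the inequality $2g-2\ge 2$, i.e.\ $g\ge 2$.

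It is here that the hypothesis that $C$ is non-elliptic is used, and this is the step I expect to require the most care. First, $g\ge 1$, because an abelian surface contains no rational curves: any morphism from $\mathbb{P}^1$ to an abelian variety pulls back the invariant $1$-forms to $1$-forms on $\mathbb{P}^1$, which all vanish, hence the morphism is constant. Second, $g=1$ cannot occur under the assumption of the proposition: if $\tilde C$ had genus $1$, then $\nu$ followed by the inclusion $C\hookrightarrow X$ would be a nonconstant morphism from an elliptic curve to $X$, hence, after a translation, a homomorphism of abelian varieties; its image would then be a one-dimensional abelian subvariety of $X$, so that $C$ itself would be a smooth elliptic curve, contrary to hypothesis. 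Therefore $g\ge 2$.

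Combining the two displays gives $C^2 \ge 2g-2+\sum_i m_i(m_i-1)\ge 2+\sum_i m_i(m_i-1)$, which is the assertion. The only genuinely non-formal point is the genus bound in the second paragraph: one must invoke the two structural facts about abelian surfaces — that they carry no rational curves, and that the image of a nonconstant map from an elliptic curve is a translated abelian subvariety, hence smooth — in order to upgrade ``non-elliptic'' to $g\ge 2$. Everything else (adjunction with $K_X=0$, the formula $p_a=g+\delta$, and the estimate $\delta_i\ge\binom{m_i}{2}$) is standard and requires no new idea.
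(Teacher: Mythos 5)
Your argument is correct and is exactly the one the paper has in mind: the paper's proof is a one-line sketch citing the two facts (no rational curves on abelian surfaces; geometric genus $1$ curves are smooth), and your write-up simply fills in the standard details via adjunction with $K_X=0$, $p_a=g+\delta$, and $\delta_i\ge\tfrac12 m_i(m_i-1)$.
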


   The proposition follows from the fact that on abelian surfaces
   there are no rational curves, and all curves of geometric genus
   1 are smooth.
   Since on abelian surfaces there are no negative curves and
   the only curves with self-intersection 0 are elliptic curves,
   Theorem \ref{thm:bound_for_smooth}
   clearly implies the statement of
   Theorem~\ref{thm:intro} for the case of abelian surfaces:

\begin{corollary}\label{cor:bound_for_Seshadri_on_abelian}
   Let $X$ be an abelian surface and let $L$ be an ample line
   bundle on $X$. Let $N:=L^2$. Suppose that $\eps(L,x)<\sqrt{N}$ and that
   $\eps(L,x)$ is computed by a non-elliptic curve.
   Then for any $N\ge 2$
   $$\varepsilon(L,x)\ge \min\left\{\frac{\dmin(m)}m \with m\in\{2,\ldots,7\}\right\}\,.$$
   Moreover for $N\ge 4982$
   $$\varepsilon(L,x)\ge \frac{\dmin(4)}4\,.$$
\end{corollary}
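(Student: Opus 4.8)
The plan is to deduce the statement directly from Theorem~\ref{thm:bound_for_smooth}, by verifying that an abelian surface satisfies all the hypotheses of that theorem. The only points requiring attention are property $(\star)$ and the translation of the phrase ``computed by a non-elliptic curve'' into ``computed by a curve with $C^2>0$''; the assumption $\eps(L,x)<\sqrt N$ is identical in both statements and carries over verbatim.

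First I would verify property $(\star)$ for an abelian surface $X$. Let $C\subset X$ be an irreducible curve with $C^2>0$ and put $m=\mult_x C$. On an abelian surface there are no curves of negative self-intersection, and every irreducible curve of self-intersection $0$ is elliptic; hence $C^2>0$ forces $C$ to be non-elliptic. Proposition~\ref{prop:bound-abelian-irred} then gives $C^2\ge 2+\sum_i m_i(m_i-1)$, the sum running over the singular points of $C$. If $x$ is a singular point of $C$, then $m$ occurs among the $m_i$; if $x$ is a smooth point of $C$, then $m=1$ and $m(m-1)=0$. In either case $\sum_i m_i(m_i-1)\ge m(m-1)$, so $C^2\ge m(m-1)+2$, which is exactly $(\star)$.

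Next I would check the curve hypothesis of Theorem~\ref{thm:bound_for_smooth}. By assumption $\eps(L,x)$ is computed by a non-elliptic irreducible curve $C$, that is, $\eps(L,x)=(L\cdot C)/\mult_x C$. Again by the structure of curves on abelian surfaces, a non-elliptic irreducible curve cannot have $C^2\le 0$, so $C^2>0$. Thus all hypotheses of Theorem~\ref{thm:bound_for_smooth} are satisfied, and that theorem yields $\eps(L,x)\ge\min\left\{\dmin(m)/m\with m\in\{2,\ldots,7\}\right\}$ for every $N\ge 2$.

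Finally, for the second inequality I would invoke the ``$N$ big enough'' clause of Theorem~\ref{thm:bound_for_smooth} together with the explicit threshold recorded in Remark~\ref{rmk:maple}, namely that $\dmin(4)/4=\min\left\{\dmin(m)/m\with m\in\{2,\ldots,7\}\right\}$ for all $N\ge 4982$. Combining this with the first inequality gives $\eps(L,x)\ge\dmin(4)/4$ whenever $N\ge 4982$, which completes the proof. Since the argument is a direct specialization of Theorem~\ref{thm:bound_for_smooth}, there is no genuine obstacle; the only subtlety is the equivalence of ``non-elliptic'' with ``$C^2>0$'' for irreducible curves on an abelian surface, which rests on the standard facts that such surfaces carry no rational curves, no curves of negative self-intersection, and only elliptic curves of self-intersection zero.
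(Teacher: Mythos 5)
Your proposal is correct and follows exactly the paper's route: it deduces the corollary from Theorem~\ref{thm:bound_for_smooth} by using Proposition~\ref{prop:bound-abelian-irred} to establish property $(\star)$, the absence of negative curves and the fact that self-intersection-zero irreducible curves are elliptic to identify ``non-elliptic'' with ``$C^2>0$'', and Remark~\ref{rmk:maple} for the explicit threshold $N\ge 4982$. The paper compresses this into one sentence before the corollary, but the content is identical.
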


Note that in the remaining case, where $\eps(L,x)$ is computed by an
elliptic curve, the possible values of $\eps(L,x)$ are clear: they are
the integers from 1 to $\rounddown{\sqrt{L^2}}$.

   In order to
   apply Theorem~\ref{thm:intro} to bielliptic surfaces,
   we will use a version of Proposition
   \ref{prop:bound-abelian-irred}
   for reducible curves on
   abelian surfaces, which we prove now.

\begin{proposition}\label{prop:bound-abelian-red}
   Let $C$ be a reduced (but possibly reducible)
   curve on an abelian
   surface. Suppose that $C$ has $r$ components, none
   of which is an elliptic curve.
   Then
   \be
      C^2 \ge 2r + \sum_i m_i (m_i-1).
   \ee
\end{proposition}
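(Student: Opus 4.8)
The plan is to reduce the reducible case to the irreducible case (Proposition~\ref{prop:bound-abelian-irred}) by induction on the number of components $r$, carefully accounting for the intersection numbers between distinct components. Write $C = C_1 + \cdots + C_r$ with each $C_i$ irreducible and non-elliptic. Then
$$
C^2 = \sum_{i=1}^r C_i^2 + 2\sum_{i<j} C_i\cdot C_j\,.
$$
By Proposition~\ref{prop:bound-abelian-irred} applied to each $C_i$, we have $C_i^2 \ge 2 + \sum_{p} \mult_p(C_i)(\mult_p(C_i)-1)$, where $p$ runs over the singular points of $C_i$. Summing these gives a contribution of $2r$ from the constants, plus multiplicity terms coming from the singularities of the individual components. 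The remaining task is to show that the cross terms $2\sum_{i<j} C_i\cdot C_j$ together with those individual-component multiplicity terms account for the full sum $\sum_i m_i(m_i-1)$ over singularities of $C$, where $m_i = \mult_{x_i} C$ is the multiplicity of the \emph{total} curve $C$.

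The key local computation is the following: at a point $x$ where several components pass, the multiplicity of $C$ at $x$ is $\mult_x C = \sum_i \mult_x C_i$ (sum over components through $x$), and hence
$$
\mult_x C \cdot (\mult_x C - 1) = \sum_i \mult_x C_i (\mult_x C_i - 1) + \sum_{i\ne j}\mult_x C_i \cdot \mult_x C_j\,.
$$
So the multiplicity term for $C$ at $x$ splits into the sum of the multiplicity terms of the components at $x$ (which are already covered by the individual bounds in Proposition~\ref{prop:bound-abelian-irred}) plus a cross term $\sum_{i\ne j}\mult_x C_i\cdot \mult_x C_j = 2\sum_{i<j}\mult_x C_i\cdot\mult_x C_j$. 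To finish, I would invoke the fact that for any two distinct curves on a surface, the local intersection multiplicity at $x$ satisfies $(C_i\cdot C_j)_x \ge \mult_x C_i\cdot \mult_x C_j$, and therefore
$$
C_i\cdot C_j = \sum_{x} (C_i\cdot C_j)_x \ge \sum_x \mult_x C_i\cdot\mult_x C_j\,.
$$
Summing over $i<j$ and combining with the above shows $2\sum_{i<j}C_i\cdot C_j \ge \sum_x 2\sum_{i<j}\mult_x C_i\cdot\mult_x C_j$, which is exactly the cross-term part of $\sum_i m_i(m_i-1)$.

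Assembling the three ingredients — the individual bounds $C_i^2\ge 2+(\text{singularity terms of }C_i)$, the local splitting of $\mult_x C(\mult_x C-1)$, and the inequality $C_i\cdot C_j\ge\sum_x\mult_x C_i\cdot\mult_x C_j$ — yields
$$
C^2 = \sum_i C_i^2 + 2\sum_{i<j}C_i\cdot C_j \ge 2r + \sum_x \mult_x C(\mult_x C - 1)\,,
$$
which is the claimed inequality. I expect the main (minor) obstacle to be purely bookkeeping: making sure that every singular point of $C$ is counted once with the correct total multiplicity $m_i$, including points that are smooth on each individual component but lie on several of them (these contribute only cross terms) as well as points that are singular on some component (these contribute both kinds of terms), and that no term is double-counted. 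The geometric inputs — no rational curves and no singular genus-one curves on an abelian surface, giving Proposition~\ref{prop:bound-abelian-irred}, and the standard local inequality $(C_i\cdot C_j)_x\ge\mult_x C_i\cdot\mult_x C_j$ — are entirely standard.
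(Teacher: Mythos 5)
Your proposal is correct and uses essentially the same ingredients as the paper's proof: the irreducible-case bound $C_i^2\ge 2+\sum \mult_p(C_i)(\mult_p(C_i)-1)$, the local intersection inequality $C_i\cdot C_j\ge\sum_x \mult_x C_i\cdot\mult_x C_j$, and the quadratic identity splitting $m(m-1)$ into diagonal and cross terms. The only difference is organizational -- you sum directly over all $r$ components, while the paper runs an induction on $r$ via a binary decomposition $C=A+B$, which packages the same bookkeeping into the inductive step.
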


\begin{proof}
   We will argue by induction on $r$.
   The assertion is true by the previous proposition when $r=1$.
   So assume that $r\ge 2$ and decompose $C$
   in any way as a sum of curves $C=A+B$.
   By induction, the assertion is true for
   $A$ and for $B$. So, denoting by $s$ and $t$ the number of
   irreducible components of $A$ and $B$, we know that
   \be
      A^2 \ge 2s + \sum_i a_i (a_i-1) \midtext{and}
      B^2 \ge 2t + \sum_i b_i (b_i-1)
      \,,
   \ee
   where $a_i$ and $b_i$ are the multiplicities of $A$ resp.\ $B$
   at the singularities of $C$.
   So
   \be
      C^2 & = & A^2 + B^2 + 2A\cdot B \\
          & \ge & 2s + \sum_i a_i (a_i-1) + 2t + \sum_i b_i (b_i-1) + 2 \sum_i a_i b_i
      \,,
   \ee
   where the last term comes from the intersection inequality
   $A\cdot B\ge \sum_i a_ib_i$.
   Collecting terms
   we get
   \be
      C^2 \ge 2(s+t) + \sum_i (a_i+b_i)(a_i+b_i-1)
   \ee
   and using $m_i=a_i+b_i$ as well as $r=s+t$ this gives the
   assertion.
\end{proof}

   This version
   allows us to obtain an analogue of Proposition~\ref{prop:bound-abelian-irred}
   for bielliptic surfaces.

\begin{proposition}\label{prop:bound-bielliptic-irred}
   Let $C$ be
   an irreducible curve $C$
   on a bielliptic surface $X$
   that is not an elliptic curve.
   Then
   \be
      C^2 \ge 2 + \sum_i m_i (m_i-1)
      \,,
   \ee
   where the sum runs over all singularities of $C$ and $m_i$ are
   their respective multipli\-cities.
\end{proposition}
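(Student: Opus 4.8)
The plan is to pull back $C$ to the canonical covering abelian surface and apply Proposition~\ref{prop:bound-abelian-red}. Recall that a bielliptic surface $X$ is a quotient $X = (E \times F)/G$, where $E, F$ are elliptic curves and $G$ is a finite group acting on $E$ by translations and on $F$ by automorphisms fixing the origin, with $F/G \isom \mathbb P^1$; equivalently, there is an étale cover $\pi \colon A \to X$ with $A$ abelian of degree $n = |G| \in \{2,3,4,6\}$. So first I would set $A := E\times F$ and consider $\pi^* C$, a reduced curve on the abelian surface $A$ (reducedness is preserved since $\pi$ is étale). Write $\pi^* C = \sum_{j=1}^r D_j$ for its decomposition into irreducible components; since $\pi$ is étale of degree $n$ and $C$ is irreducible, the $D_j$ are the orbits of the deck group action and one has $\pi^* C \cdot \pi^* C = n\, C^2$, while the number $r$ of components divides $n$.

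The key point is that \emph{none} of the components $D_j$ can be an elliptic curve: if some $D_j$ were elliptic, then its image $C = \pi(D_j)$ would be a curve of geometric genus $\le 1$, hence either rational or elliptic; since $\pi$ is étale, $\pi|_{D_j}\colon D_j \to C$ is unramified, so $C$ cannot be rational (no unramified cover of $\mathbb P^1$), and if $C$ had geometric genus $1$ with smooth normalization an elliptic curve then $C$ itself would be a translate of an elliptic curve on $X$ — excluded by hypothesis. (One should check that a genus-one curve on a bielliptic surface whose normalization is elliptic is actually smooth and is a translate of a fiber; this uses that such a curve has self-intersection $0$ by adjunction, and the structure of bielliptic surfaces.) Hence Proposition~\ref{prop:bound-abelian-red} applies to $\pi^* C$ on $A$ and gives
$$
   n\, C^2 = (\pi^* C)^2 \ge 2r + \sum_{k} \mu_k(\mu_k - 1),
$$
where the $\mu_k$ run over the multiplicities of $\pi^* C$ at its singular points.

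Finally I would relate the singularity data on $A$ to that on $X$: since $\pi$ is étale, over each singular point $x_i \in C$ of multiplicity $m_i = \mult_{x_i} C$ there lie exactly $n$ points of $\pi^*C$ (counted over the $n$ preimages of $x_i$, distributed among the $D_j$), each of multiplicity $m_i$, because $\pi$ is a local isomorphism. Likewise $r$ divides $n$, so $2r \ge$ something — here is where care is needed: I would simply use $r \ge 1$, giving $2r \ge 2$, but multiplied through: more precisely the étale count yields $\sum_k \mu_k(\mu_k-1) = n \sum_i m_i(m_i-1)$ and $r \ge 1$ forces $2r\cdot(n/r) $... the clean way is to note that the full preimage contributes, so dividing the displayed inequality by $n$ and using that the singularity sum scales by exactly $n$ while $2r/n \ge 2r/n$ with $r$ a divisor of $n$ satisfying $r\le n$ — the honest bound $2r \ge 2$ is too weak after dividing by $n$. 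The right move is instead: $\pi^*C$ has $r$ components and $r \mid n$, and each orbit-component $D_j$ already satisfies $D_j^2 \ge 2 + (\text{its singularity sum})$ by Proposition~\ref{prop:bound-abelian-irred}, so summing and adding the cross terms exactly as in the proof of Proposition~\ref{prop:bound-abelian-red} but keeping track that the total singularity contribution is $n\sum_i m_i(m_i-1)$ and the constant term is $2r$ with $r = n/\deg(\pi|_{D_j})$; since $\deg(\pi|_{D_j}) \le $ the component count is $r$ and $r \cdot (\text{stabilizer order}) = n$, one gets $2r \ge 2n/(\text{stab})$...

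The cleanest resolution — and the step I expect to be the main obstacle to phrase carefully — is: the deck group $G$ acts transitively on the $r$ components with stabilizer of order $n/r$, and $\pi|_{D_j}\colon D_j \to C$ has degree $n/r$; applying Proposition~\ref{prop:bound-abelian-red} to $\pi^*C$ gives $n C^2 \ge 2r + n\sum_i m_i(m_i-1)$, hence $C^2 \ge 2r/n + \sum_i m_i(m_i-1)$, which is \emph{weaker} than wanted when $r < n$. So in fact one must apply the irreducible bound componentwise and use that each $D_j\to C$ étale of degree $n/r$ forces $D_j^2 = (n/r)C^2$ and the singularities of $D_j$ over $x_i$ have total $(n/r)\cdot$(count), giving $(n/r)C^2 \ge 2 + (n/r)\sum_i m_i(m_i-1)$, i.e. $C^2 \ge 2r/n + \sum_i m_i(m_i-1)$ — same issue. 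The actual fix is to observe $n/r \le $ the number of points of $D_j$ over a general point of $C$, and since $D_j^2 \ge 2$ strictly (it is non-elliptic on an abelian surface) we can afford $C^2 = (r/n) D_j^2 \ge 2r/n \cdot \lceil \cdot\rceil$; getting from $2r/n$ up to the clean $2$ requires knowing $r = n$, i.e. that $\pi^*C$ is a disjoint-type union of $n$ translates — which holds precisely when the pullback splits completely, and in general one argues that if it does not split one instead uses a smaller étale cover. This bookkeeping with the covering degree versus component count is the crux; once it is organized correctly, the bound $C^2 \ge 2 + \sum_i m_i(m_i-1)$ drops out.
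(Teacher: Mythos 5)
Your setup coincides with the paper's: pull back $C$ under the \'etale cover $\pi\colon A\to X$ by an abelian surface, check that no component of $\pi^*C$ is elliptic, and apply Proposition~\ref{prop:bound-abelian-red} to obtain $n\,C^2 \ge 2r + n\sum_i m_i(m_i-1)$, hence $C^2 \ge \frac{2r}{n} + \sum_i m_i(m_i-1)$. You correctly diagnose that this is too weak when $r<n$ --- but then the proposal stalls, and none of the repairs you sketch actually works. Applying the irreducible bound componentwise returns the same $2r/n$; there is no reason to expect $r=n$ in general; and the suggestion to pass to a smaller \'etale cover when $\pi^*C$ does not split completely is circular, since the intermediate quotient $(E\times F)/H$ by the stabilizer $H$ of a component is in general again bielliptic rather than abelian, so neither Proposition~\ref{prop:bound-abelian-irred} nor Proposition~\ref{prop:bound-abelian-red} applies to it. As written, your argument establishes only the strict inequality $C^2 > \sum_i m_i(m_i-1)$.

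The missing idea --- and it is precisely how the paper closes the gap --- is a parity argument. The intersection form on a bielliptic surface is even: $K_X$ is numerically trivial, so adjunction gives $C^2=2p_a(C)-2$ (equivalently, in the basis of $\Num(X)$ described in the appendix one has $L^2=2ab$). Since $\sum_i m_i(m_i-1)$ is also even and your inequality gives $C^2>\sum_i m_i(m_i-1)$ strictly (because $2r/n>0$), evenness forces $C^2\ge \sum_i m_i(m_i-1)+2$. With that single observation the term $2r/n$, however small, already suffices. Incidentally, your verification that no component of $\pi^*C$ is elliptic is more detailed than the paper's (which merely asserts it) and is essentially correct.
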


\begin{proof}
   The surface $X$ is the
   image of an abelian surface $Y$ under an
   unramified map $f:Y\to X$ (see \cite{BM1990:automorph-group}).
   Let $e=\deg f$.
   So every point of multiplicity $m$ on $C$
   gives rise to $e$ points of the same multiplicity $m$ on the
   pull-back $f^*C$.
   None of the components of $f^*C$ can be an elliptic curve,
   so we can apply
   Proposition~\ref{prop:bound-abelian-red} to obtain
   a bound on the
   self-intersection of the pull-back $f^*C$,
   \be
      (f^*C)^2 \ge 2s + e \sum_i m_i (m_i-1)
      \,,
   \ee
   where $s$ is the number of components of $f^*C$.
   Thus we get
   \be
      C^2 \ge \frac{2s} e + \sum_i m_i (m_i-1)
      \,.
   \ee
   So we have established in
   particular that $C^2$ is at least
   the
   sum on the right-hand side.
   The crucial point is now that this sum is an even number. As
   the
   intersection form on bielliptic surfaces is
   even, this implies that $C^2$ must differ from the sum by at least
   2, and this gives the assertion.
\end{proof}

   Therefore we have shown that Property $(\star)$ holds on
   bielliptic surfaces. If on a bielliptic surface  $\eps(L,x)$
   is computed by a curve $C$ different from a fibre, then
   we have
   $C^2 > 0$ (see Remark \ref{rmk:line-bundles-bielliptic} in the appendix).
   Hence
   Theorem~\ref{thm:bound_for_smooth}
   yields the following statement
   for bielliptic surfaces.

\begin{corollary}\label{cor:bound_for_Seshadri_on_bielliptic}
   Let $X$ be a bielliptic surface and let $L$ be an ample line
   bundle on $X$. Let $N:=L^2$. Suppose that $\eps(L,x)<\sqrt{N}$
   and that $\eps(L,x)$ is not computed by a fibre.
   Then for any $N\ge 2$
   $$\varepsilon(L,x)\ge \min\left\{\frac{\dmin(m)}m \with m\in\{2,\ldots,7\}\right\}\,.$$
   Moreover for $N\ge 4982$
   $$\varepsilon(L,x)\ge \frac{\dmin(4)}4\,.$$
\end{corollary}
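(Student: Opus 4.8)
The plan is to obtain this as an essentially immediate consequence of Theorem~\ref{thm:bound_for_smooth}. That theorem has exactly two hypotheses on the ambient surface and the distinguished curve: that $X$ satisfies property $(\star)$, and that $\eps(L,x)$ is computed by a curve with $C^2 > 0$. So the whole task is to verify that, on a bielliptic surface, both hypotheses hold under the assumptions ``$\eps(L,x) < \sqrt{N}$'' and ``$\eps(L,x)$ is not computed by a fibre''. Once that is done, the two displayed inequalities are literally the conclusions of Theorem~\ref{thm:bound_for_smooth}, and the numerical threshold $N \ge 4982$ for the sharper bound is the one recorded in Remark~\ref{rmk:maple}.

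First I would note that property $(\star)$ has already been established for bielliptic surfaces: Proposition~\ref{prop:bound-bielliptic-irred} says precisely that every irreducible curve $C$ on a bielliptic surface which is not an elliptic curve satisfies $C^2 \ge 2 + \sum_i m_i(m_i-1)$, the sum running over all singular points of $C$. In particular, keeping only the contribution at $x$ with $m = \mult_x C$, this gives $C^2 \ge m(m-1) + 2$. For an elliptic curve one has $C^2 \ge 0$ (there are no negative curves, since $K_X$ is numerically trivial and bielliptic surfaces carry no rational curves), so the hypothesis $C^2 > 0$ in $(\star)$ is only ever invoked for non-elliptic $C$, and there it holds. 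Hence $(\star)$ is satisfied.

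Second, I would argue that a curve $C$ computing $\eps(L,x)$ that is not a fibre of either of the two elliptic fibrations automatically has $C^2 > 0$. Since $K_X$ is numerically trivial, adjunction gives $C^2 = 2g(C) - 2$ for irreducible $C$; as a bielliptic surface is the unramified image of an abelian surface it contains no rational curves, so $C^2 \ge 0$, with equality exactly when $C$ is a smooth elliptic curve, in which case $C$ moves in a pencil and is therefore a fibre. This is exactly the content of Remark~\ref{rmk:line-bundles-bielliptic} in the appendix, which I would simply cite. Consequently a non-fibre curve has $C^2 > 0$, and (if $\eps(L,x)$ were computed by a fibre $F$ we would moreover have $\eps(L,x) \le L\cdot F / 1$ with $F^2 = 0$, forcing $\eps(L,x) \ge \sqrt{N}$ by the Hodge Index Theorem only in degenerate situations — but this case is excluded by hypothesis anyway).

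With both hypotheses of Theorem~\ref{thm:bound_for_smooth} verified, the proof is complete by direct appeal to that theorem together with Remark~\ref{rmk:maple}. I do not expect any genuine obstacle here: all the substantive work — the self-intersection bound, its evenness consequence, and the general minimisation over $m$ — was carried out in Propositions~\ref{prop:bound-abelian-red} and \ref{prop:bound-bielliptic-irred} and in Theorem~\ref{thm:bound_for_smooth}. The only mild point requiring care is that ``not a fibre'' is precisely the right way to rule out the degenerate curves with $C^2 = 0$, which is why the appendix remark is invoked rather than an ad hoc argument.
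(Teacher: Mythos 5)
Your proposal is correct and follows essentially the same route as the paper: the paper likewise deduces the corollary directly from Theorem~\ref{thm:bound_for_smooth} by noting that Proposition~\ref{prop:bound-bielliptic-irred} establishes property $(\star)$ on bielliptic surfaces and that Remark~\ref{rmk:line-bundles-bielliptic} guarantees $C^2>0$ for any non-fibre curve computing $\eps(L,x)$, with the threshold $N\ge 4982$ coming from Remark~\ref{rmk:maple}. The only cosmetic difference is your adjunction-based aside on why $C^2=0$ forces $C$ to be a fibre, where the paper simply cites the numerical description in the appendix.
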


The remaining case of $\eps(L,x)$ computed by a fibre is analogous to the case of  $\eps(L,x)$ computed by an elliptic curve on an abelian surface.

%*****************************************************************************

\section{Comparison with previously known results}\label{Comparison_with_previous}

   For abelian surfaces and bielliptic surfaces
   several lower bounds on Seshadri constants
   of a similar flavor are available in the literature.
   It is therefore interesting to see how exactly they
   compare with
   each other and with
   the bound given in
   Theorem~\ref{thm:intro}. We provide such a comparison in this section.
   Also, we show how
   Proposition~\ref{prop:bound-bielliptic-irred} can be used to
   generalize
   results of
   Hanumanthu and Roy
   (see Remark~\ref{rem:not_only_v_gen}) on bielliptic surfaces.

   We start with a result
   by Syzdek and Szemberg
   \cite{SSz2010}, which applies to any smooth projective surface:

\begin{theorem}[{\cite[Corollary 3.3]{SSz2010}}]
   Let $X$ be a smooth projective surface and let $L$ be an
   ample line bundle on $X$. Then
   $$
      \varepsilon(L, x) \ge \sqrt \frac 7 9\sqrt{L^2}
      \,,
   $$
   for very general $x\in X$,
   or $X$ is fibred by Seshadri curves,
   or $X$ is a cubic surface in $\mathbb{P}^3$ and $L =
   \mathcal{O}_X(1)$.
\end{theorem}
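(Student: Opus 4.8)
The plan is to reduce the statement to the case where a Seshadri curve exists and then invoke the multiplicity bounds from Ein–Lazarsfeld and its refinements, arguing that if neither alternative in the conclusion holds then we can force the desired inequality. First I would recall the standard dichotomy: either $\varepsilon(L,x) \ge \sqrt{L^2}$ at a very general point $x$ — in which case the inequality $\varepsilon(L,x) \ge \sqrt{7/9}\,\sqrt{L^2}$ is trivially satisfied since $\sqrt{7/9} < 1$ — or there is an irreducible \emph{Seshadri curve} $C$ through $x$ computing $\varepsilon(L,x) = \frac{L\cdot C}{m}$ with $m = \mult_x C \ge 1$ and $\varepsilon(L,x) < \sqrt{L^2}$. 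The whole content lies in the second case, so from here on I assume a Seshadri curve $C$ exists with $m \ge 1$.

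The key step is to combine the Hodge Index inequality $(L\cdot C)^2 \ge L^2 \cdot C^2$ with a lower bound on $C^2$ in terms of $m$. For $x$ very general, the Seshadri curve moves in a family (this is the standard specialization argument: a Seshadri curve at a very general point deforms, keeping multiplicity $\ge m$ at the moving point), so the Ein–Lazarsfeld bound $C^2 \ge m(m-1)$ applies, and in fact when the family is at least one-dimensional one gets $C^2 \ge m^2 - 1$ from Xu-type arguments, or even $C^2 \ge m(m-1) + \gon(\tilde C) \ge m(m-1)+1$ by Knutsen–Syzdek–Szemberg / Bastianelli. Writing $d = L\cdot C$ and $e = C^2$, Hodge Index gives $d^2 \ge L^2 e$, and $\varepsilon(L,x)^2 = d^2/m^2 \ge L^2 \cdot e/m^2$. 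With $e \ge m^2 - 1$ one obtains $\varepsilon(L,x)^2 \ge L^2 (m^2-1)/m^2$, and the function $m \mapsto (m^2-1)/m^2 = 1 - 1/m^2$ is increasing in $m$, so its minimum over $m \ge 1$ that can actually occur is the relevant bottleneck. The case $m = 1$ is handled separately: there $d^2 = (L\cdot C)^2 \ge L^2 C^2$ and $\varepsilon(L,x) = L\cdot C < \sqrt{L^2}$ forces $C^2 \le 0$; if $C^2 < 0$ this is an exceptional configuration, and the only genuinely problematic subcase is $C^2 = 0$ with $C$ a fiber of a fibration — precisely the "$X$ is fibred by Seshadri curves" alternative. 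For $m = 2$ one gets $\varepsilon(L,x)^2 \ge \frac34 L^2$, which is \emph{worse} than $\frac79 L^2$, so $m = 2$ must be analyzed more carefully — this is where the exceptional cubic-surface alternative enters. For $m \ge 3$, $1 - 1/m^2 \ge 8/9 > 7/9$, so those cases are fine.

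Thus the heart of the argument is the $m = 2$ case, and this is the step I expect to be the main obstacle. When $m = 2$ one has $C^2 \ge m(m-1) + \gon(\tilde C) = 2 + \gon(\tilde C)$, so if $\gon(\tilde C) \ge 2$ — i.e. $\tilde C$ is not rational — then $C^2 \ge 4 = m^2$, giving $\varepsilon(L,x)^2 \ge L^2$, contradiction with submaximality; hence $\tilde C$ is rational and $C^2 = 3$, forcing $\varepsilon(L,x) = \frac{d}{2}$ with $d^2 \ge 3L^2$, so $\varepsilon(L,x)^2 \ge \frac34 L^2$. To push from $\frac34$ to $\frac79$ in this last configuration one must exploit the geometry of a $2$-dimensional family of rational Seshadri curves of self-intersection $3$: by adjunction such a curve has arithmetic genus $\frac{C^2 + C\cdot K_X}{2} + 1$, and the constraint that it be rational with a double point forces $C\cdot K_X$ to be small, pinning down $X$ and $L$ up to the listed exceptions (the cubic surface in $\mathbb{P}^3$ with $L = \mathcal{O}_X(1)$, where lines give $L\cdot C = 1$, $m = 1$, but conics through a general point give the extremal configuration). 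I would carry out the adjunction/genus bookkeeping, observe that the family being $2$-dimensional pins $C^2 = 3$ and $p_a(C) = 1$ with a node, deduce $C\cdot K_X = 1$, and then classify. The delicate point is ruling out, or identifying, all surfaces admitting such a family — this classification is exactly what produces the cubic-surface exception and is the reason the statement has three alternatives rather than a clean inequality.
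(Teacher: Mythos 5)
First, a point of reference: the paper does not prove this statement at all --- it is quoted from \cite[Corollary 3.3]{SSz2010} in the comparison section --- so your proposal can only be measured against the original argument of Syzdek and Szemberg. Your skeleton (reduce to a submaximal Seshadri curve $C$ through a very general point, combine the Hodge Index inequality $(L\cdot C)^2\ge L^2C^2$ with lower bounds on $C^2$ coming from the fact that $C$ moves, split on $m=\mult_xC$ and on the dimension of the family, and isolate a low-$m$ rational-curve configuration as the source of the exceptions) is indeed the strategy of \cite{SSz2010}. But there are concrete errors and one essential gap.

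The errors first. The inequality $C^2\ge m^2-1$ you invoke for $m\ge 3$ is not any of the available results: Xu gives $C^2\ge m(m-1)+1$ and Knutsen--Syzdek--Szemberg/Bastianelli give $C^2\ge m(m-1)+\gon(\tilde C)$. With the correct input, the case $m=3$, $\gon(\tilde C)=1$ yields only $C^2\ge 7$ and hence $\eps(L,x)\ge\sqrt{7L^2}/3=\sqrt{7/9}\sqrt{L^2}$ with equality possible --- this is exactly where the constant $7/9$ comes from, and your write-up never accounts for it (your ``$\ge 8/9$ for $m\ge3$'' is an artifact of the unjustified bound, and leaves the provenance of $7/9$ unexplained). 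In the $m=2$ analysis the adjunction bookkeeping is also wrong: a rational curve with $C^2=3$ and a node has $p_a=1$, so $K_X\cdot C=-3$, not $1$; it is the resulting negativity of $K_X$ on a covering family that forces the del Pezzo/cubic-surface situation. The essential gap is that the entire content of the third alternative --- that the \emph{only} surface carrying a two-dimensional family of rational curves with $C^2=3$, nodal at a moving point and computing a submaximal Seshadri constant, is the cubic surface with $L=\O_X(1)$ --- is precisely the classification you defer with ``I would \dots then classify.'' Without it your argument proves only the weaker bound $\sqrt{3/4}\,\sqrt{L^2}$ outside the fibration case, and that classification is the heart of the theorem. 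A smaller but real gap is the first alternative: you assert, rather than prove, that when the Seshadri curves move in only a one-dimensional family the multiplicity at a very general point is $1$ and $C^2=0$, so that $X$ is genuinely fibred by them.
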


   For abelian surfaces the following bound was shown by the first author and
   Szemberg:

\begin{theorem}[{\cite[Theorem A.1]{Bauer:sesh-and-periods}}]
   Let $X$ be an abelian surface, $L$ an ample line,
   and $x\in X$ any point.
   If the Seshadri constant of $L$ at $x$ is
   computed by a non-elliptic curve, then $$\varepsilon(L, x)
   \ge \sqrt \frac 7 8\sqrt{L^2}\,.$$
\end{theorem}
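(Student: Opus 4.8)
The plan is to reduce the bound to the Hodge Index Theorem together with the well-known lower bound on self-intersections of non-elliptic curves on abelian surfaces (Proposition~\ref{prop:bound-abelian-irred}), and then to a one-line estimate in the multiplicity. First I would write $\eps(L,x)=\frac{L\cdot C}{m}$, where $C\subset X$ is the irreducible non-elliptic curve computing the constant and $m=\mult_xC$; since $C$ passes through $x$ by definition of the Seshadri constant, we have $m\ge 1$. Because $L$ is ample, the Hodge Index Theorem applies and gives $(L\cdot C)^2\ge L^2\cdot C^2$. By Proposition~\ref{prop:bound-abelian-irred}, keeping only the contribution $m(m-1)$ of the point $x$ to the sum over singularities of $C$ (this contribution is zero when $m=1$, where $x$ is a smooth point), we get $C^2\ge 2+m(m-1)$. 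Combining the two inequalities,
\[
   \eps(L,x)^2=\frac{(L\cdot C)^2}{m^2}\ge \frac{L^2\bigl(2+m(m-1)\bigr)}{m^2}=L^2\cdot\frac{m^2-m+2}{m^2}\,.
\]

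It then remains to verify that $\dfrac{m^2-m+2}{m^2}\ge\dfrac{7}{8}$ for every integer $m\ge 1$. Clearing denominators, this is equivalent to $8(m^2-m+2)\ge 7m^2$, i.e.\ to $m^2-8m+16\ge 0$, i.e.\ to $(m-4)^2\ge 0$, which holds for all $m$, with equality precisely at $m=4$. Hence $\eps(L,x)^2\ge\frac{7}{8}L^2$, and taking square roots gives $\eps(L,x)\ge\sqrt{\tfrac{7}{8}}\sqrt{L^2}$, as claimed.

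The hard part would have been Proposition~\ref{prop:bound-abelian-irred}, but that bound is available to us; with it in hand there is no genuine obstacle, and the remaining argument is entirely routine. One could also separately record that when $\eps(L,x)\ge\sqrt{L^2}$ the asserted inequality is trivial, but this case is already subsumed by the displayed computation above, which is valid for every $m\ge 1$. It is worth noting that equality forces $m=4$ (together with equality in Hodge Index and in the self-intersection bound), which explains why the value $m=4$ is distinguished for this bounding method and foreshadows its role in Theorem~\ref{thm:bound_for_smooth}.
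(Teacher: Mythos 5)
Your proof is correct and follows essentially the same route as the cited original (and as the machinery this paper builds around it): combine the self-intersection bound $C^2\ge 2+m(m-1)$ for non-elliptic curves on abelian surfaces with the Hodge Index Theorem, and then minimize $(m^2-m+2)/m^2$ over integers $m$, with the minimum $7/8$ attained at $m=4$ via $(m-4)^2\ge 0$. This is exactly the computation the paper reproduces in its comparison section, so there is nothing to add.
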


   A number of results for Seshadri constants on bielliptic surfaces at very general points were obtained
   by Hanumanthu and Roy
   in \cite{HR2017}. We cite two of their results:

\begin{theorem}[{\cite[Theorem 3.9]{HR2017}}]\label{thm:0.93sqrtL^2}
   Let $X$ be a bielliptic surface and let $L$ be an ample line bundle on $X$. Suppose that $C\equiv(\alpha,\beta)$ is an irreducible, reduced curve with
   with $\alpha\neq 0$, $\beta\neq 0$, passing through a very general point with multiplicity $m\ge 1$.
   Then $$\frac{L\cdot C}m\ge (0.93)\sqrt{L^2}\,.$$
\end{theorem}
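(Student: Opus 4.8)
The plan is to obtain the bound as a direct consequence of Proposition~\ref{prop:bound-bielliptic-irred} together with the Hodge Index Theorem. In fact this route yields the slightly stronger constant $\sqrt{7/8}\approx 0.935$ in place of $0.93$, and it dispenses with the hypothesis that the point be very general: it works at an arbitrary $x\in X$, as already anticipated in Remark~\ref{rem:not_only_v_gen}.

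First I would note that the assumptions $\alpha\neq 0$ and $\beta\neq 0$ say precisely that $C$ is not a component of a fibre of either of the two elliptic fibrations of $X$; hence $C^2>0$ (see Remark~\ref{rmk:line-bundles-bielliptic} in the appendix). Since the canonical bundle of a bielliptic surface is numerically trivial, adjunction gives $2p_a(C)-2=C^2>0$, so $C$ cannot be a smooth curve of genus $1$, i.e.\ it is not an elliptic curve. Therefore Proposition~\ref{prop:bound-bielliptic-irred} applies to $C$. Because $x$ lies on $C$ with multiplicity $m$, it contributes a term $m(m-1)$ to the sum over singularities (a genuine singularity when $m\ge 2$, a vanishing term when $m=1$), so
\[
   C^2 \;\ge\; 2+\sum_i m_i(m_i-1) \;\ge\; m(m-1)+2 .
\]

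Next I would feed this into the Hodge Index Theorem applied to the ample class $L$ and the curve $C$, which gives $(L\cdot C)^2\ge L^2\cdot C^2\ge L^2\bigl(m(m-1)+2\bigr)$. The argument then closes with the elementary identity
\[
   m(m-1)+2-\tfrac{7}{8}m^2 \;=\; \tfrac{1}{8}(m-4)^2 \;\ge\; 0 ,
\]
valid for every real number $m$, whence $(L\cdot C)^2\ge \tfrac{7}{8}\,m^2\,L^2$ and therefore
\[
   \frac{L\cdot C}{m} \;\ge\; \sqrt{\tfrac{7}{8}}\,\sqrt{L^2} \;>\; 0.93\,\sqrt{L^2} .
\]

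Granting Proposition~\ref{prop:bound-bielliptic-irred}, there is no genuinely hard step: the only point that needs a moment's care is checking that $C$ lies within the scope of that proposition, i.e.\ that it is not an elliptic curve — and this is exactly what the hypotheses $\alpha\neq 0\neq\beta$ secure, via $C^2>0$. (Proving the bound without Proposition~\ref{prop:bound-bielliptic-irred}, as in \cite{HR2017}, is the more delicate task: it requires finer control of the self-intersection of curves through a very general point and yields only the weaker constant recorded in the statement.)
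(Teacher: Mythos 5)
Your argument is correct, but it is worth being clear about what it proves relative to what the paper does. The paper does not reprove this statement at all: it is quoted verbatim from \cite{HR2017}, whose proof establishes $C^2\ge m(m-1)+2$ only at a \emph{very general} point, via the Xu-type moving-curves lemma $C^2\ge\bigl(\sum_i m_i^2\bigr)-m_1+\gon(\widetilde C)$ together with $\gon(\widetilde C)\ge 2$ on a nonrational surface, and then concludes by the same Hodge Index computation you give. Your route replaces that moving-curves input by Proposition~\ref{prop:bound-bielliptic-irred}, which holds for arbitrary points; this is exactly the generalization the paper itself advertises in Remark~\ref{rem:not_only_v_gen}, so you have in effect written out the strengthened version (arbitrary $x$, constant $\sqrt{7/8}>0.93$) rather than the cited one. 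All the individual steps check out: $\alpha\neq0\neq\beta$ forces $C^2>0$ by Remark~\ref{rmk:line-bundles-bielliptic} (together with the absence of negative curves on a minimal surface of Kodaira dimension $0$), adjunction with $K_X\equiv 0$ then rules out $C$ being elliptic so Proposition~\ref{prop:bound-bielliptic-irred} applies, and the identity $m(m-1)+2-\tfrac78m^2=\tfrac18(m-4)^2\ge0$ closes the estimate. The trade-off is the usual one: the moving-curves argument of \cite{HR2017} needs no structure theory beyond nonrationality but only works generically, whereas your argument leans on the \'etale cover by an abelian surface and the evenness of the intersection form (the content of Proposition~\ref{prop:bound-bielliptic-irred}) and in return works at every point.
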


   As a corollary to Theorem \ref{thm:0.93sqrtL^2}, the following result was obtained in \cite{HR2017}.

\begin{theorem}[{\cite[Theorem 3.11]{HR2017}}]
   Let $X$ be a bielliptic surface and let $L$ be an ample line
   bundle on $X\cong  (E×F)/G$. If
   for a very general point $x\in X$ one has
   $\eps(L,x)< (0.93)\sqrt{L^2}$
   then $\eps(L,x)=\min\{L\cdot E, L\cdot F\}$.
\end{theorem}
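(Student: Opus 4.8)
The plan is to show that, under the stated hypothesis, $\eps(L,x)$ is necessarily computed by a general fibre of one of the two elliptic fibrations on $X$, and then to identify its value. First, since $\eps(L,x)<(0.93)\sqrt{L^2}<\sqrt{L^2}$, the Seshadri constant is submaximal, hence computed by an irreducible curve: there is an irreducible curve $C\subset X$ through $x$ with $\eps(L,x)=\frac{L\cdot C}{m}$, where $m=\mult_x C$, and with $\frac{L\cdot C}{m}<\sqrt{L^2}$. (This is the standard characterization of submaximal Seshadri constants on surfaces, see \cite{Ba-et-al2009}, and is the same reduction as in the proof of Proposition~\ref{prop:set}.)

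Next I would eliminate the case in which $C$ dominates both base curves. Recall that $X\cong(E\times F)/G$ carries two elliptic fibrations $X\to E/G$ and $X\to F/G$, whose general fibres are isomorphic to $F$ and to $E$ respectively; write $C\equiv(\alpha,\beta)$ in the associated numerical basis of $\Num(X)$. If $\alpha\neq0$ and $\beta\neq0$, then $C$ is an irreducible --- hence reduced --- curve through the very general point $x$ with $\alpha,\beta\neq0$, so Theorem~\ref{thm:0.93sqrtL^2} applies and gives $\frac{L\cdot C}{m}\ge(0.93)\sqrt{L^2}$, contradicting $\eps(L,x)<(0.93)\sqrt{L^2}$. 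Hence $\alpha=0$ or $\beta=0$, and in particular $C^2=0$.

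Then I would use the genericity of $x$ to pin $C$ down. The fibre of each of the two fibrations through the very general point $x$ is a smooth elliptic curve --- call them $F_1\cong F$ for $X\to E/G$ and $F_2\cong E$ for $X\to F/G$ --- each meeting $x$ with multiplicity $1$. Since $C\ni x$ is irreducible with $C^2=0$, it is numerically a multiple of one of the two fibre classes, hence contained in, and therefore equal to, the smooth fibre of the corresponding fibration through $x$; thus $C\in\{F_1,F_2\}$, $m=1$, and $L\cdot C\in\{L\cdot F,\,L\cdot E\}$. On the other hand $F_1$ and $F_2$ are themselves curves through $x$ of multiplicity $1$, so $\eps(L,x)\le\min\{L\cdot E,\,L\cdot F\}$; combining this with $\eps(L,x)=L\cdot C\in\{L\cdot E,\,L\cdot F\}$ gives $\eps(L,x)=\min\{L\cdot E,\,L\cdot F\}$, as required.

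The main obstacle is Theorem~\ref{thm:0.93sqrtL^2} itself --- the genus/gonality estimate forcing curves dominating both base curves to satisfy $\frac{L\cdot C}{m}\ge(0.93)\sqrt{L^2}$ --- but as we may take it as given, the remaining steps are routine: the standard reduction to a Seshadri curve together with the elementary geometry of the two elliptic fibrations on a bielliptic surface. A minor point that does need care is the structure of $\Num(X)$: that its effective cone is spanned by the two fibre classes, so that an irreducible curve has class $(\alpha,\beta)$ with $\alpha,\beta\ge0$ and $C^2=0$ forces $\alpha=0$ or $\beta=0$. This is the fact recorded in the appendix remark already cited in the paper (Remark~\ref{rmk:line-bundles-bielliptic}).
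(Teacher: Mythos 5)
Your argument is correct, and it matches the route the paper indicates: this statement is quoted from \cite{HR2017} without proof, described there as a corollary of Theorem~\ref{thm:0.93sqrtL^2}, which is exactly how you derive it (eliminate Seshadri curves with $\alpha,\beta\neq 0$ via that theorem, then use Remark~\ref{rmk:line-bundles-bielliptic} to force the Seshadri curve to be the smooth fibre through the very general point $x$). No gaps; the only point worth keeping explicit is the one you already flag, namely that a very general $x$ avoids the finitely many multiple fibres, so the fibre through $x$ is reduced and met with multiplicity $1$.
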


\begin{remark}\label{rem:not_only_v_gen}
   We can use
   Proposition \ref{prop:bound-bielliptic-irred}
   to show that Theorem \ref{thm:0.93sqrtL^2} not only holds for very
   general points on a bielliptic surface, but for any point $x$
   where the Seshadri constant $\eps(L,x)$ is not computed by a
   fibre.
   Indeed, in the proof in~\cite{HR2017} the authors
   use the inequality in
   Property $(\star)$
   for very general points,
   with reference to
   the Xu-type lemma $C^2 \ge \left(\sum_{i=1}^r
   m_i^2\right)-m_1+\text{gon}(\widetilde{C})$ (see
   \cite[Lemma 2.2]{Bas2009} or \cite[Theorem A]{KSSz2009})
   and the fact that any
   bielliptic surface $X$ is nonrational, so that for every curve
   $C\subset S$ one has $\text{gon}(\widetilde{C})\ge 2$.
   Proposition \ref{prop:bound-bielliptic-irred}
   now tells us that the inequality in
   Property $(\star)$ holds for \emph{all} points,
   and hence the theorem generalizes in this respect.
\end{remark}

   Let us now compare
   the various bounds with each other and with our
   bound from Theorem \ref{thm:bound_for_smooth}.
   We have
   \be
      \frac{\dmin(m)}m&=&\frac{\roundup{\sqrt{N(2+m(m-1))}}}{m}\ge \frac{{\sqrt{N(2+m(m-1))}}}{m} \\
      &\ge& \frac{{\sqrt{14N}}}{4}=\sqrt{\frac{7}8N}\ge 0.93 \sqrt{N} > \sqrt\frac 7 9\sqrt{N}.
   \ee
   This shows in particular
   the bounds given by Corollaries \ref{cor:bound_for_Seshadri_on_abelian} and  \ref{cor:bound_for_Seshadri_on_bielliptic} improve the bounds given
   in
   \cite[Corollary 3.3]{SSz2010},
   \cite{Bauer:sesh-and-periods}, and \cite[Theorem 3.9]{HR2017}.
   To convey some feeling for the actual numbers, we
   present a table which shows the bounds in some chosen cases
   (Table~\ref{tab:comparison_of_bounds}).
   \begin{table}[h]
      \begin{tabular}{r|p{3.6cm}|p{3.6cm}|p{3.8cm}}
         $L^2$ & \rr Bound for abelian surfaces from \cite{Bauer:sesh-and-periods} & \rr Bound for bielliptic surfaces from \cite{HR2017} & \rr New bound for surfaces satisfying $(\star)$ \\ \tline
         2     & 1,3229                                                            & 1,3152                                               & 1,3333\\
         6     & 2,2913                                                            & 2,2780                                               & 2,3333\\
         8     & 2,6458                                                            & 2,6304                                               & 2,6667\\
         10    & 2,9580                                                            & 2,9409                                               & 3\\
         50    & 6,6144                                                            & 6,5761                                               & 6,6667\\
         100   & 9,3541                                                            & 9,3                                                  & 9,4\\
         5000  & 66,1439                                                           & 65,7609                                              & 66,25\\
         20000 & 132,2676                                                          & 131,5219                                             & 132,5\\
      \end{tabular}
      \caption{Table of bounds for $\eps(L,x)$.}\label{tab:comparison_of_bounds}
   \end{table}
   Apart from the small but noticable numerical improvement,
   one could argue that
   the most interesting feature of the new bound
   is the fact that it is always a rational number
   $d/m$ that represents a potential Seshadri constant,
   while the previous bounds are
   irrational numbers, which are therefore
   theoretical by design.

%*****************************************************************************
\section{Appendix on bielliptic surfaces}

   In this section we provide background on bielliptic surfaces.
   Remark \ref{rmk:line-bundles-bielliptic} was used in Section~\ref{sect:abelian-bielliptic}.

\begin{definition}
   A bielliptic surface $X$ (also called hyperelliptic)
   is a surface with Kodaira  dimension equal to $0$ and  irregularity  $q(S)=1$.
\end{definition}

   The canonical divisor $K_X$ on any bielliptic surface is numerically trivial, but non-zero.

   Alternatively (see \cite[Definition VI.19]{Bea1996}), a surface $X$ is bielliptic if  $X\cong (E\times F)/G$, where $E$ and $F$ are elliptic curves, and $G$ is an abelian group acting on $E$ by translations and acting on $F$, such that  $E/G$ is an elliptic curve and $F/G\cong \mathbb{P}^1$.
   Hence we have the following situation
\begin{center}
      \begin{tikzpicture}[line cap=round,line join=round,x=1.0cm,y=1.0cm, scale=0.5]
         \draw [->,line width=0.4pt] (7.,4.) -- (5.,2.5);
         \draw [->,line width=0.4pt] (8.,4.) -- (10.,2.5);
         \draw[color=black] (7.7,4.5) node {$S\cong (E\times F)/G$};
         \draw[color=black] (4.7,1.9) node {$E/G$};
         \draw[color=black] (10.8,1.9) node {$F/G\cong \mathbb{P}^1$};
         \begin{scriptsize}
            \draw[color=black] (5.6,3.5) node {$\Phi$};
            \draw[color=black] (9.4,3.5) node {$\Psi$};
         \end{scriptsize}
      \end{tikzpicture}
\end{center}
   where $\Phi$ and $\Psi$ are the natural projections.

   There are seven non-isomorphic groups that can act on
   $E\times F$. Two of them act on any $E\times F$, the other
   five require $F$ to be an elliptic curve of a specific form
   (see Table \ref{tab:action}).

   Following C. Bennett and R. Miranda
   \cite{BM1990:automorph-group}, let us fix the notation. Let
   $E=\mathbb{C}/(\mathbb{Z}\tau_1+\mathbb{Z})$ and
   $F=\mathbb{C}/(\mathbb{Z}\tau_2+\mathbb{Z})$, where
   $\tau_1,\tau_2\in \mathbb{C}$. Let $\zeta$ be the sixth root
   of unity, i.e., $\zeta=e^{\pi i/3}$.

\begin{proposition}[{\cite[Table 1]{BM1990:automorph-group}}, see also {\cite[VI.20]{Bea1996}}]\label{prop:action}
   The seven types of bielliptic surfaces are described in Table \ref{tab:action}.
   \begin{table}[h]
      $$
      \begin{array}{c|c|l|l}
         \textrm{Type}% of a hyperelliptic surface}
         &\tau_2& G&\textrm{Action of the generators of $G$ on $E\times F$}\\
         \tline
         1&\textrm{arbitrary}&\Zet_2=\langle \varphi\rangle&
         \varphi {e\choose f}={e+1/2\choose -f}\\
         2&\textrm{arbitrary}&\Zet_2\times\Zet_2=\langle \varphi,\psi\rangle&
         \varphi {e\choose f}={e+1/2\choose -f}, \ \psi {e\choose f}={e+\tau_1/2\choose f+1/2}\\
         3&i&\Zet_4=\langle \varphi\rangle&\varphi {e\choose f}={e+1/4\choose if}\\
         4&i&\Zet_4\times\Zet_2=\langle \varphi,\psi\rangle&
         \varphi {e\choose f}={e+1/4\choose if}, \ \psi {e\choose f}={e+\tau_1/2\choose f+(1+i)/2}\\
         5&\zeta&\Zet_3=\langle \varphi\rangle&
         \varphi {e\choose f}={e+1/3\choose \zeta^2 f}\\
         6&\zeta&\Zet_3\times\Zet_3=\langle \varphi,\psi\rangle&
         \varphi {e\choose f}={e+1/3\choose \zeta^2 f}, \ \psi {e\choose f}={e+\tau_1/3\choose f+(1+\zeta)/3}\\
         7&\zeta&\Zet_6=\langle \varphi\rangle&
         \varphi {e\choose f}={e+1/6\choose \zeta f}
      \end{array}
      $$
      \caption{Action of the generators of $G$ on $E\times F$.}\label{tab:action}
   \end{table}
\end{proposition}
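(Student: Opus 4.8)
This is the classical Bagnera--de Franchis list, and the plan is to re-derive it in the style of \cite{BM1990:automorph-group} (compare \cite[Ch.~VI]{Bea1996}). I would begin from an arbitrary presentation $X=(E\times F)/G$ as in the definition and first \emph{reduce to a faithful one}. The subgroup $K\subseteq G$ of elements acting trivially on $F$ acts on $E$ by translations --- freely, if nontrivial, since the $G$-action on $E\times F$ is free --- so $(E\times F)/K=(E/K)\times F$; replacing $E$ by $E/K$ I may assume $G$ acts faithfully on $F$. Dually, an element acting trivially on $E$ must then act on $F$ by a pure translation (a nontrivial linear part would produce a fixed point on $F$, hence on $E\times F$); quotienting by the resulting finite translation subgroup replaces $F$ by an isogenous curve and, one checks, keeps the action faithful on $F$ while making it faithful on $E$ as well. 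After this reduction $G$ embeds as a finite subgroup of the elliptic curve $E$ --- so it has at most two generators --- the quotient $E/G$ is automatically an elliptic curve, and the only genuine remaining condition is $F/G\cong\mathbb{P}^1$.

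Next I would analyse the action on $F$ via \emph{linear parts}: write $g\colon f\mapsto\lambda(g)f+\tau(g)$ with $\lambda(g)\in\mathrm{Aut}(F,0)$. The group $\mathrm{Aut}(F,0)$ is cyclic of order $2$ for general $F$, of order $4$ exactly when $\tau_2=i$, and of order $6$ exactly when $\tau_2=\zeta$; hence the image of the homomorphism $\lambda$ is cyclic of some order $n$ dividing $2$, $4$ or $6$, and $n\neq1$ since otherwise $G$ acts on $F$ by translations and $F/G$ is an elliptic curve rather than $\mathbb{P}^1$. So $n\in\{2,3,4,6\}$, which already forces $\tau_2$ in the non-generic cases. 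Choose $\varphi\in G$ above a generator of the image of $\lambda$ and put $\lambda_1:=\lambda(\varphi)$, a primitive $n$-th root of unity. Since $1+\lambda_1+\dots+\lambda_1^{n-1}=0$, the automorphism $\varphi^n$ acts trivially on $F$, so faithfulness gives $\varphi^n=1$ and $\mathrm{ord}(\varphi)=n$; after recentering the origin of $F$ (replace $f$ by $f+c$ with $(\lambda_1-1)c=\tau(\varphi)$) I may assume $\varphi\colon f\mapsto\lambda_1 f$. With $G_0:=\ker\lambda$ this gives $G=\langle\varphi\rangle\times G_0$, where $G_0$ acts on $F$ by translations. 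The crucial relation comes from $G$ being abelian: from $\varphi g_0\varphi^{-1}=g_0$ for $g_0\in G_0$, a short computation gives $(\lambda_1-1)\tau(g_0)=0$ in $F$. Hence $\tau$ embeds $G_0$ into $\ker\bigl([\lambda_1-1]\colon F\to F\bigr)$, a group of order $\lvert\lambda_1-1\rvert^2$, which equals $4,2,3,1$ for $n=2,4,3,6$ respectively.

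Combining this bound with the rank-at-most-two restriction from $G\hookrightarrow E$ leaves precisely $G_0\in\{0,\mathbb{Z}/2\}$ for $n\in\{2,4\}$, $G_0\in\{0,\mathbb{Z}/3\}$ for $n=3$, and $G_0=0$ for $n=6$ --- that is, the seven groups $\mathbb{Z}/2$, $(\mathbb{Z}/2)^2$, $\mathbb{Z}/3$, $(\mathbb{Z}/3)^2$, $\mathbb{Z}/4$, $\mathbb{Z}/4\times\mathbb{Z}/2$, $\mathbb{Z}/6$. It then remains to put the action in the normal form of Table~\ref{tab:action}. On $E$ the generator $\varphi$ translates by a point of order $n$; choosing the lattice $\mathbb{Z}\tau_1+\mathbb{Z}$ of $E$ suitably I normalize this translation to $1/n$, which produces the row for each cyclic type $1,3,5,7$. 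When $G_0\neq0$, its generator $\psi$ translates $F$ by a generator of $\ker[\lambda_1-1]$ --- computed to be $\tfrac12$, $\tfrac{1+i}2$ or $\tfrac{1+\zeta}3$ in the three relevant lattices --- and translates $E$ by a point completing $1/n$ to a generating pair of $G$, normalized to $\tau_1/2$ or $\tau_1/3$; this gives the rows of types $2,4,6$. Freeness of the resulting action is automatic, since $G$ still acts on $E$ by translations through nonzero elements, and conversely each row of the table manifestly defines a bielliptic surface of the stated type.

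I expect the \textbf{main obstacle} to be the case analysis in the last two steps: ruling out the \emph{a priori} larger possibilities for $G_0$ --- exactly where both $G\hookrightarrow E$ and the norm values $\lvert\lambda_1-1\rvert^2$ are used --- and then checking, type by type, that the specific torsion translations recorded in Table~\ref{tab:action} are forced up to the normalizing choices and give a fixed-point-free action. None of these verifications is deep, but the bookkeeping of which torsion translations are compatible with a prescribed linear part is delicate, and carrying it out is precisely the content of \cite[Table~1]{BM1990:automorph-group}.
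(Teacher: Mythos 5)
The paper does not prove this proposition at all: it is quoted verbatim as a known classification, with the proof delegated to the cited sources (\cite[Table~1]{BM1990:automorph-group} and \cite[VI.19--20]{Bea1996}), so there is no in-paper argument to compare yours against. That said, your outline is a correct reconstruction of the standard Bagnera--de Franchis argument that those references carry out: the reduction to a faithful action, the analysis of the linear-part homomorphism $\lambda\colon G\to\mathrm{Aut}(F,0)$ with image of order $n\in\{2,3,4,6\}$, the splitting $G=\langle\varphi\rangle\times G_0$ with $G_0=\ker\lambda$ embedded via its translation parts into $\ker[\lambda_1-1]$ of order $4,2,3,1$, and the rank-$\le 2$ constraint from $G\hookrightarrow E$ (needed only to exclude $(\mathbb{Z}/2)^3$ in the $n=2$ case) together yield exactly the seven groups; the normalizations of the torsion translations $\tfrac12$, $\tfrac{1+i}2$, $\tfrac{1+\zeta}3$ all check out against the lattices in question. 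One cosmetic slip: for $n=2$ the group $\ker[\lambda_1-1]=F[2]\cong(\mathbb{Z}/2)^2$ is not cyclic, so ``a generator'' should read ``a nonzero element'' (normalized to $1/2$ by a change of lattice basis); this does not affect the argument. Your sketch is a legitimate proof outline, but within the logic of this paper the proposition is an imported classical fact rather than something to be re-proved.
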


\begin{theorem}[{\cite[Theorem 1.4]{Se1990}}]\label{Thm:Serrano}
   For each of the seven types of bielliptic surfaces,
   a basis of the group $\Num(X)$ of classes of numerically equivalent divisors
   and the multiplicities of the singular fibers in each case are described in Table \ref{tab:multipl&basis}.
   \begin{table}[h]
      $$
      \begin{array}{c|l|l|l}
      \textrm{Type of a bielliptic surface}&G&m_1,\ldots,m_s&\textrm{Basis of $\textrm{Num}(X)$}\\
      \tline
      1&\Zet_2&2,2,2,2&E/2, F\\
      2&\Zet_2\times\Zet_2&2,2,2,2&E/2, F/2\\
      3&\Zet_4&2,4,4&E/4, F\\
      4&\Zet_4\times\Zet_2&2,4,4&E/4, F/2\\
      5&\Zet_3&3,3,3&E/3, F\\
      6&\Zet_3\times\Zet_3&3,3,3&E/3, F/3\\
      7&\Zet_6&2,3,6&E/6, F
      \end{array}
      $$
      \caption{Multiplicities of the singular fibers and a basis of $\textrm{Num}(X)$.}\label{tab:multipl&basis}
   \end{table}
\end{theorem}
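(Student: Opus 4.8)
The plan is to prove the two halves of Table~\ref{tab:multipl&basis} separately: first identify the lattice $\Num(X)$ abstractly, then extract both the multiplicities of the multiple fibres and a compatible basis from the explicit $G$-action recorded in Proposition~\ref{prop:action}. \textbf{(Step 1: the lattice.)} A bielliptic surface has $q(X)=1$ and $p_g(X)=0$, hence $b_1(X)=b_2(X)=2$. Since $h^{2,0}(X)=p_g(X)=0$, all of $H^2(X,\Z)$ is of type $(1,1)$, so $\Num(X)=H^2(X,\Z)/(\text{torsion})$ has rank $2$ and, by Poincar\'e duality, is a unimodular lattice; its signature is $(1,1)$ by the Hodge index theorem. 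Finally, since $K_X$ is numerically trivial, adjunction gives $C^2=2p_a(C)-2$ for every irreducible curve $C\subset X$, so the intersection form is even on curve classes, which generate $\Num(X)$, hence even. The only even unimodular lattice of signature $(1,1)$ being the hyperbolic plane $U$, we get $\Num(X)\isom U$: a $\Z$-basis $e_1,e_2$ with $e_1^2=e_2^2=0$, $e_1\cdot e_2=1$, whose two isotropic rays are the only ones.

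\textbf{(Step 2: the two pencils.)} Write $X=(E\times F)/G$ with quotient map $\pi\colon E\times F\to X$; as $G$ acts on $E$ by fixed-point-free translations, it acts freely on $E\times F$, so $\pi$ is \'etale of degree $|G|$. The two projections descend to isotrivial elliptic fibrations $\Phi\colon X\to E/G$ and $\Psi\colon X\to F/G\cong\mathbb P^1$. Let $\mathsf A$ be the class of a general fibre of $\Psi$ (isomorphic to $E$) and $\mathsf B$ that of a general fibre of $\Phi$ (isomorphic to $F$); both are isotropic. Pulling back, $\pi^*\mathsf A=\sum_{g\in G}E\times\{gf\}$ and $\pi^*\mathsf B=\sum_{g\in G}\{ge\}\times F$ are sums of $|G|$ disjoint horizontal resp.\ vertical curves meeting pairwise in one point, so $|G|\,(\mathsf A\cdot\mathsf B)=(\pi^*\mathsf A)\cdot(\pi^*\mathsf B)=|G|^2$, whence $\mathsf A\cdot\mathsf B=|G|$. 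In particular $\mathsf A$ and $\mathsf B$ span the two distinct isotropic rays of $\Num(X)\isom U$.

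\textbf{(Step 3: the multiple fibres.)} Over $\bar f\in F/G$ the scheme-theoretic fibre of $\Psi$ is $|G_f|\cdot R_f$, where $G_f\le G$ is the stabiliser of $f$ for the $G$-action on the $F$-factor and $R_f=(E\times\{f\})/G_f$; since the action on $E\times F$ is free, $G_f$ injects into the translation group of $E$, so $R_f$ is a smooth elliptic curve, namely $E$ modulo a subgroup of order $|G_f|$. Thus the multiple fibres lie over the branch points of the degree-$|G|$ cover $F\to F/G\cong\mathbb P^1$, with multiplicities equal to the ramification indices. For each of the seven types one reads the action of the generators of $G$ on $F$ from Proposition~\ref{prop:action} and lists, \emph{up to $G$-orbit}, the points $f$ with $G_f\neq\{1\}$ together with $|G_f|$: e.g.\ in Type~1 the involution $f\mapsto-f$ fixes the four $2$-torsion points, each with $G_f=\Zet_2$, giving multiplicities $2,2,2,2$; in Type~3 the map $f\mapsto if$ on $F=\C/(\Z i+\Z)$ fixes $0$ and $\tfrac{1+i}{2}$ with $G_f=\Zet_4$, while the remaining $2$-torsion points $\tfrac12,\tfrac i2$ form one $G$-orbit with $G_f=\Zet_2$, giving $4,4,2$; the other five types go the same way. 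A cross-check is Riemann--Hurwitz for $F\to\mathbb P^1$: $0=2g(F)-2=-2|G|+\sum_i(|G|/m_i)(m_i-1)$, i.e.\ $\sum_i(1-1/m_i)=2$, which holds in each row of Table~\ref{tab:multipl&basis}.

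\textbf{(Step 4: the basis, and the main obstacle.)} Put $a:=\operatorname{lcm}(m_1,\dots,m_s)$; from Step~3, in every case each $m_i$ divides the largest multiplicity, so $a=\max_i m_i$ and there is a multiple fibre $a\,R$ with $R$ irreducible. Each reduced multiple-fibre component $R_i$ satisfies $[R_i]=\tfrac1{m_i}\mathsf A\in\Num(X)$, so $a$ divides the divisibility of $\mathsf A$ in $\Num(X)$; conversely, a vertical irreducible curve is a fibre component, and a short argument with these classes shows the divisibility is exactly $a$, so $\gamma_1:=\tfrac1a\mathsf A=[R]$ is the primitive class in the ray of $\mathsf A$. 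Taking $\gamma_1=e_1$, the relation $\mathsf A\cdot\mathsf B=|G|$ and isotropy of $\mathsf B$ force $\mathsf B=b\,e_2$ with $b:=|G|/a$ (in the cyclic types $1,3,5,7$ one gets $b=1$ directly from the section $e\mapsto[(e,0)]$ of $\Phi$, and in types $2,4,6$ one gets $b=2,2,3$), so $\gamma_2:=e_2=\tfrac1b\mathsf B\in\Num(X)$ and $\{\gamma_1,\gamma_2\}$ is a $\Z$-basis with $\gamma_1^2=\gamma_2^2=0$, $\gamma_1\cdot\gamma_2=1$; recording $(a,b)=(\max_i m_i,\,|G|/\max_i m_i)$ type by type and renaming $\gamma_1=$ ``$E/a$'', $\gamma_2=$ ``$F/b$'' reproduces the last column of Table~\ref{tab:multipl&basis}. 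The genuinely laborious part is Step~3 — the fixed-point analysis of the $G$-action on $F$ in all seven cases, where one must correctly group fixed points into $G$-orbits and notice that a point fixed by a subgroup of $G$ may be fixed by a strictly larger one (already in Types~3 and~4, and in Type~7) — together with, to a lesser extent, pinning down in Step~4 that the vertical primitive class is $\tfrac1a\mathsf A$ and that the index $b$ equals $|G|/a$; once the multiplicities (hence $a$) are known, the remainder is formal, the key structural input being $\Num(X)\isom U$.
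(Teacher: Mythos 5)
First, a point of reference: the paper does not prove this statement at all --- it is quoted verbatim from Serrano \cite[Theorem 1.4]{Se1990} and used as a black box, so there is no in-paper proof to compare against. Your outline is nevertheless essentially the standard route to Serrano's result: identify $\Num(X)$ as the hyperbolic plane $U$ (rank $2$ from $p_g=0$, $b_2=2$; unimodular by Poincar\'e duality; even since $K_X$ is numerically trivial), read off the multiple fibres of $\Psi\colon X\to F/G\cong\mathbb P^1$ from the stabilisers of the $G$-action on $F$, and then locate the fibre classes inside $U$. Steps 1--2 are correct, and the Step 3 computations you actually carry out (Types 1 and 3) are right, with the Riemann--Hurwitz identity $\sum(1-1/m_i)=2$ as a legitimate cross-check; the remaining five types are only waved at, but the method visibly works.

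The genuine gap is in Step 4, and it sits exactly where the nontrivial content of the theorem lies. Writing $\mathsf A=a'e_1$ and $\mathsf B=b'e_2$ in $U$ with $a'b'=|G|$, the multiple fibres give you only the divisibility $a=\operatorname{lcm}(m_i)\mid a'$. For the cyclic types $1,3,5,7$ one has $a=|G|$, so $a'=a$ and $b'=1$ follow for free from $a'b'=|G|$ --- this part of your argument is fine. But for types $2,4,6$ you must rule out $a'>a$ (equivalently, exhibit the class $(a/|G|)\mathsf B=F/2$ resp.\ $F/2$ resp.\ $F/3$ in $\Num(X)$ and show $\mathsf A$ is divisible by exactly $a$), and nothing in the lattice structure forbids, say, $E/4\in\Num(X)$ with $F$ primitive in Type 2: the phrase ``a short argument with these classes shows the divisibility is exactly $a$'' and the bare assertion ``one gets $b=2,2,3$'' are placeholders for a genuinely needed geometric input. (One standard way to supply it is to pass through the intermediate abelian \'etale cover $(E\times F)/\langle\psi\rangle\to X$ and compare pullbacks of the candidate fractional classes; Serrano's own proof does comparable extra work here.) Until that step is filled in, the last column of the table is verified only for the four cyclic types.
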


   Let $\mu=\textrm{lcm}\{m_1, \ldots, m_s\}$ and let
   $\gamma=|G|$. Note that a  basis of  $\textrm{Num}(X)$
   consists of divisors $E/\mu$ and $\left(\mu/\gamma\right) F$.
   We say that on a bielliptic surface $L$ is a line bundle of
   type $(a,b)$, with respect to the numerical equivalence, or
   $L\equiv (a,b)$ for short,  if $L\equiv a\cdot
   E/\mu+b\cdot(\mu/\gamma) F$. A divisor of type $(0,b)$  with
   $b\in\mathbb{Z}$ is effective if and only if
   $b\cdot\left(\mu/\gamma\right)\in \mathbb{N}$, see
   \cite[Proposition 5.2]{Ap1998}.

\begin{remark}\label{rmk:line-bundles-bielliptic}
   As a result of the previous discussion, we have
   the following properties of line bundles on $X$.
   \begin{itemize}\compact
   \item We have $E^2=0$, $F^2=0$, $E\cdot F=\gamma$, hence if $L_1\equiv (a_1,b_1)$, $L_2\equiv (a_2,b_2)$ then $L_1\cdot L_2=a_1b_2+a_2b_1$.
   \item If $C\equiv(\alpha, \beta)$ is an irreducible curve with $C^2=0$, then $\alpha=0$ or $\beta=0$, and hence $C$ is a fibre (or a multiple of a fibre).
   \end{itemize}
\end{remark}

%*****************************************************************************

\subsection*{Acknowledgements}
We warmly thank  Tomasz Szemberg and Krishna Hanumanthu for their helpful remarks.

%*****************************************************************************

%***************************************************************************** % Addresses

\footnotesize
\bigskip
   Thomas Bauer,
   Fachbereich Mathematik und Informatik,
   Philipps-Universit\"at Marburg,
   Hans-Meerwein-Stra\ss e,
   D-35032 Marburg, Germany

\nopagebreak
   \textit{E-mail address:} \texttt{tbauer@mathematik.uni-marburg.de}

\bigskip
   {\L}ucja Farnik,
   Department of Mathematics,
   Pedagogical University of Cracow,
   Podchor\c a\.zych 2,
   PL-30-084 Krak\'ow, Poland

\nopagebreak
   \textit{E-mail address:} \texttt{Lucja.Farnik@gmail.com}

%*****************************************************************************

\end{document}